\renewcommand{\cref}[1]{\Cref{#1}}
\newcommand{\eqnref}[1]{\hyperref[eqn:#1]{(\ref*{eqn:#1})}}
\newcommand{\V}[1]{\bm{#1}}
\renewcommand{\real}{\mathbb{R}}
\newcommand{\nth}[1]{#1^{\text{th}}}
\newcommand{\trans}[1]{#1^\intercal}
\newcommand{\invtrans}[1]{#1^{-\intercal}}
\newcommand{\transv}[1]{\trans{\V{#1}}}
\newcommand{\invtransv}[1]{\invtrans{\V{#1}}}
\newcommand{\indicator}[1]{\mathbbm{1}_{#1}}
\newcommand{\coeffof}[2]{\mathfrak{C}_{a}^{(#1)}\left[#2\right]}
\newcommand{\defeq}{\vcentcolon=}
\DeclareMathOperator*{\argmin}{arg\,min}
\newtheorem{prop}{Proposition}
\newtheorem{definition}{Definition}
\newcommand*\bigcdot{\mathpalette\bigcdot@{.5}}
\newcommand*\bigcdot@[2]{\mathbin{\vcenter{\hbox{\scalebox{#2}{$\m@th#1\bullet$}}}}}
\newcommand{\vx}{\V{x}}
\newcommand{\vX}{\V{X}}
\newcommand{\vXk}{\V{X_k}}
\newcommand{\vxs}{\V{x^*}}
\newcommand{\vxz}{\V{x_0}}
\newcommand{\vxi}{\V{x_i}}
\newcommand{\vxk}{\V{x_k}}
\newcommand{\vf}{\V{f}}
\newcommand{\vF}{\V{F}}
\newcommand{\vR}{\V{R}}
\newcommand{\vFi}{\V{F_i}}
\newcommand{\vz}{\V{0}}
\newcommand{\vfi}{\V{f_i}}
\newcommand{\vfk}{\V{f_k}}
\renewcommand{\vv}{\V{v}}
\newcommand{\vp}{\V{p}}
\newcommand{\vP}{\V{P}}
\newcommand{\vq}{\V{q}}
\newcommand{\vH}{\V{H}}
\newcommand{\vU}{\V{U}}
\newcommand{\vW}{\V{W}}
\newcommand{\vM}{\V{M}}
\newcommand{\vA}{\V{A}}
\newcommand{\vSig}{\V{\Sigma}}
\newcommand{\vUz}{\V{U_0}}
\newcommand{\vWz}{\V{W_0}}
\newcommand{\vPz}{\V{P_0}}
\newcommand{\vXz}{\V{X_0}}
\newcommand{\vSigz}{\V{\Sigma_0}}
\newcommand{\vUk}{\V{U_k}}
\newcommand{\vWk}{\V{W_k}}
\newcommand{\vPk}{\V{P_k}}
\newcommand{\vSigk}{\V{\Sigma_k}}
\newcommand{\vV}{\V{V}}
\newcommand{\vE}{\V{E}}
\newcommand{\vB}{\V{B}}
\newcommand{\vBu}{\V{B_u}}
\newcommand{\vBw}{\V{B_w}}
\newcommand{\vBp}{\V{B_p}}
\newcommand{\vI}{\V{I}}
\newcommand{\vD}{\V{D}}
\newcommand{\vsig}{\V{\sigma}}
\DeclareMathOperator{\vect}{vec}
\DeclareMathOperator{\diag}{diag}
\DeclareMathOperator{\rms}{RMS}
\newcommand{\matrow}[2]{#1_{\bigcdot #2}}
\newcommand{\matcol}[2]{#1_{#2 \bigcdot}}
\newcommand{\algmargin}{\the\ALG@thistlm}
\newlength{\whilewidth}
\algnewcommand{\parState}[1]{\State%
  \parbox[t]{\dimexpr\linewidth-\algmargin}{\strut \hangindent2em #1\strut}}
\DeclareSymbolFont{yhlargesymbols}{OMX}{yhex}{m}{n}
\DeclareMathAccent{\wideparen}{\mathord}{yhlargesymbols}{"F3}
\let\expandableinput\@@input\makeatother
\title{SANM: A Symbolic Asymptotic Numerical Solver with Applications in Mesh
Deformation}
\begin{document}
\author{Kai Jia}
\orcid{0000-0001-8215-9899}
\affiliation{%
  \institution{MIT CSAIL}
  \streetaddress{32 Vassar St}
  \city{Cambridge}
  \state{MA}
  \postcode{02139 }
  \country{USA}}
\email{jiakai@mit.edu}


\begin{abstract}
    Solving nonlinear systems is an important problem. Numerical continuation
    methods efficiently solve certain nonlinear systems. The Asymptotic
    Numerical Method (ANM) is a powerful continuation method that usually
    converges faster than Newtonian methods. ANM explores the landscape of the
    function by following a parameterized solution curve approximated with a
    high-order power series. Although ANM has successfully solved a few graphics
    and engineering problems, prior to our work, applying ANM to new problems
    required significant effort because the standard ANM assumes quadratic
    functions, while manually deriving the power series expansion for
    nonquadratic systems is a tedious and challenging task.

    This paper presents a novel solver, SANM, that applies ANM to solve
    symbolically represented nonlinear systems. SANM solves such systems in a
    fully automated manner. SANM also extends ANM to support many nonquadratic
    operators, including intricate ones such as singular value decomposition.
    Furthermore, SANM generalizes ANM to support the implicit homotopy form.
    Moreover, SANM achieves high computing performance via optimized system
    design and implementation.

    We deploy SANM to solve forward and inverse elastic force equilibrium
    problems and controlled mesh deformation problems with a few constitutive
    models. Our results show that SANM converges faster than Newtonian solvers,
    requires little programming effort for new problems, and delivers comparable
    or better performance than a hand-coded, specialized ANM solver. While we
    demonstrate on mesh deformation problems, SANM is generic and potentially
    applicable to many tasks.
\end{abstract}


\begin{CCSXML}
<ccs2012>
   <concept>
       <concept_id>10010147.10010148.10010149.10010161</concept_id>
       <concept_desc>Computing methodologies~Optimization algorithms</concept_desc>
       <concept_significance>500</concept_significance>
       </concept>
   <concept>
       <concept_id>10010147.10010371.10010396.10010398</concept_id>
       <concept_desc>Computing methodologies~Mesh geometry models</concept_desc>
       <concept_significance>500</concept_significance>
       </concept>
 </ccs2012>
\end{CCSXML}

\ccsdesc[500]{Computing methodologies~Optimization algorithms}
\ccsdesc[500]{Computing methodologies~Mesh geometry models}

\keywords{asymptotic numerical method, nonlinear solving,
    finite element method, geometry processing}

\begin{teaserfigure}
    \begin{subfigure}[t]{0.245\textwidth}
        \centering
        \captionsetup{width=.8\linewidth}
        \includegraphics[width=\linewidth]{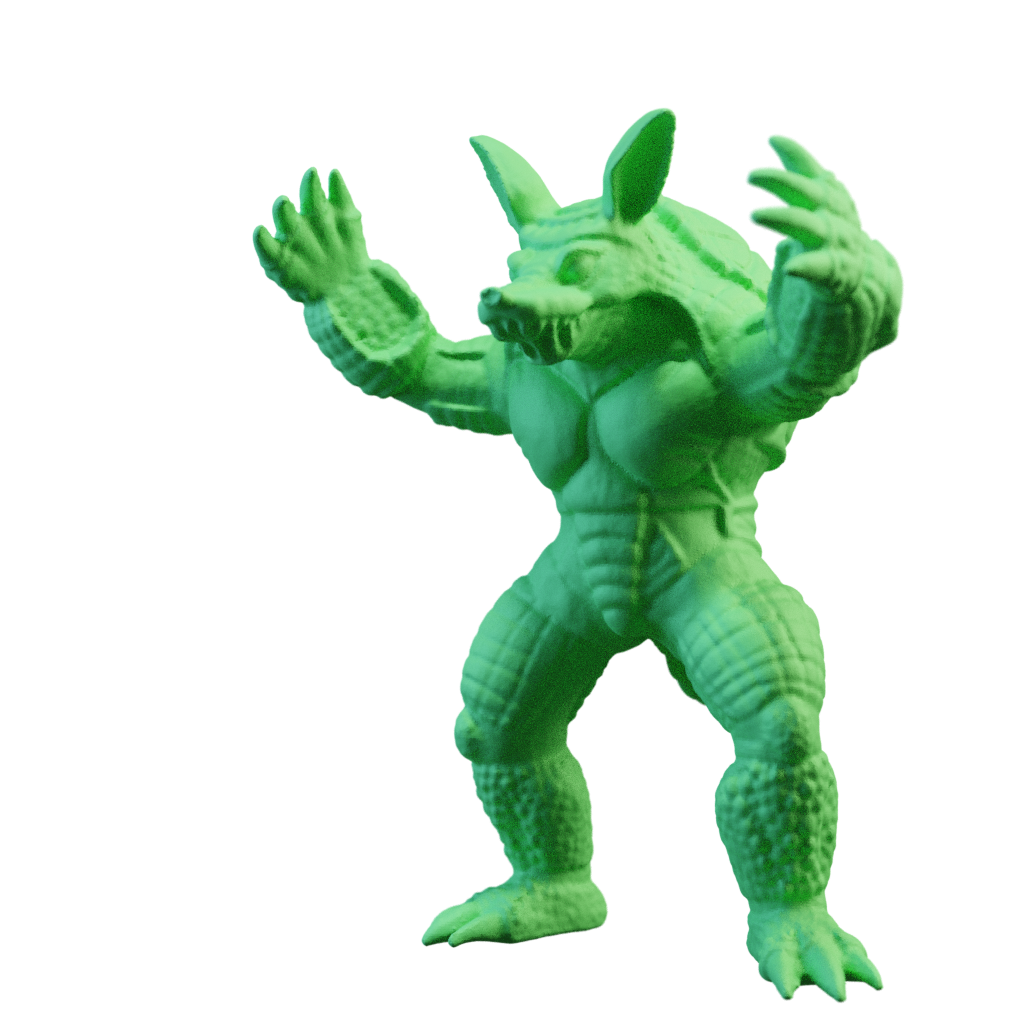}
        \caption{Inverse gravity equilibrium with incompressible neo-Hookean
            material. This figure shows a rest shape that will deform to the
            original Armadillo under gravity.}
        \label{fig:summary:a}
    \end{subfigure}
    \hfill
    \begin{subfigure}[t]{0.245\linewidth}
        \centering
        \captionsetup{width=.8\linewidth}
        \includegraphics[width=\linewidth]{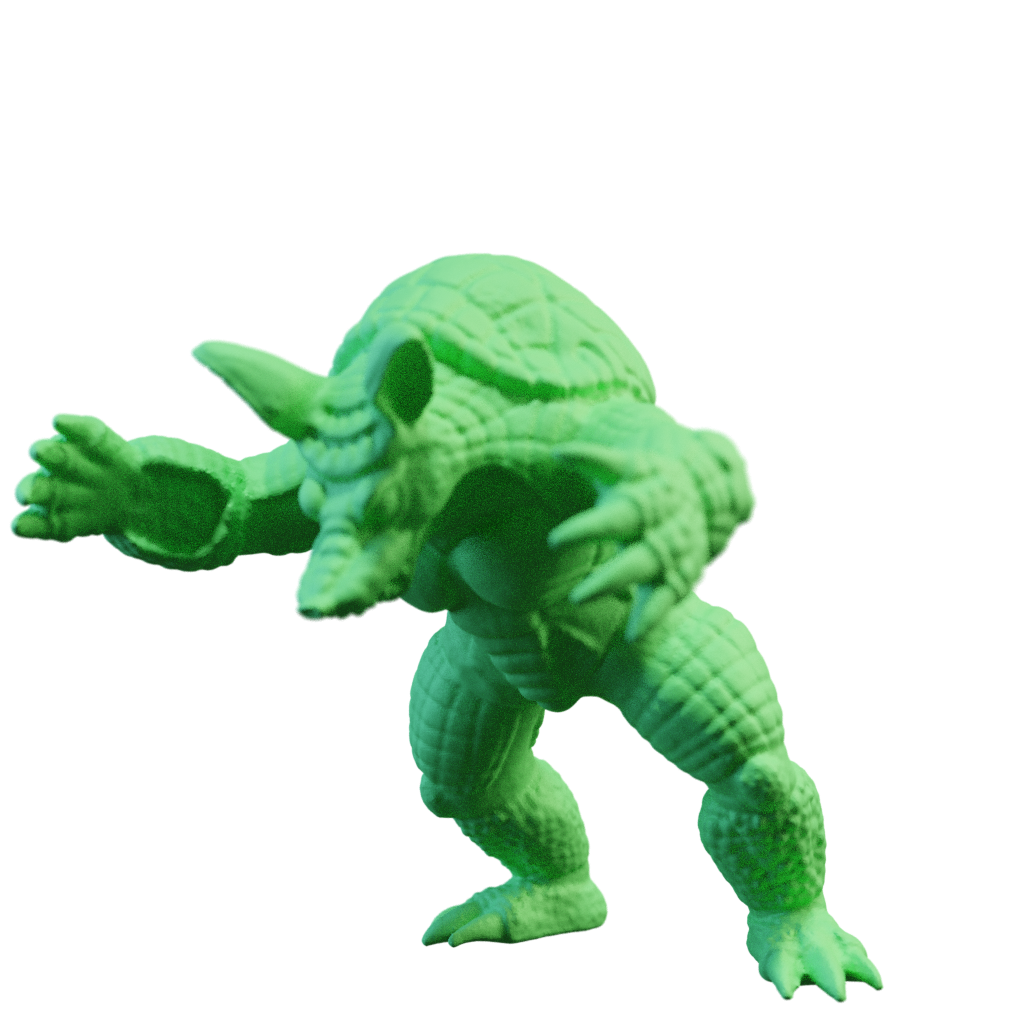}
        \caption{Gravity equilibrium with compressible neo-Hookean material.
            This figure shows the final shape to which an Armadillo will deform
            under gravity.}
        \label{fig:summary:b}
    \end{subfigure}
    \hfill
    \begin{subfigure}[t]{0.245\linewidth}
        \centering
        \captionsetup{width=.8\linewidth}
        \includegraphics[width=\linewidth]{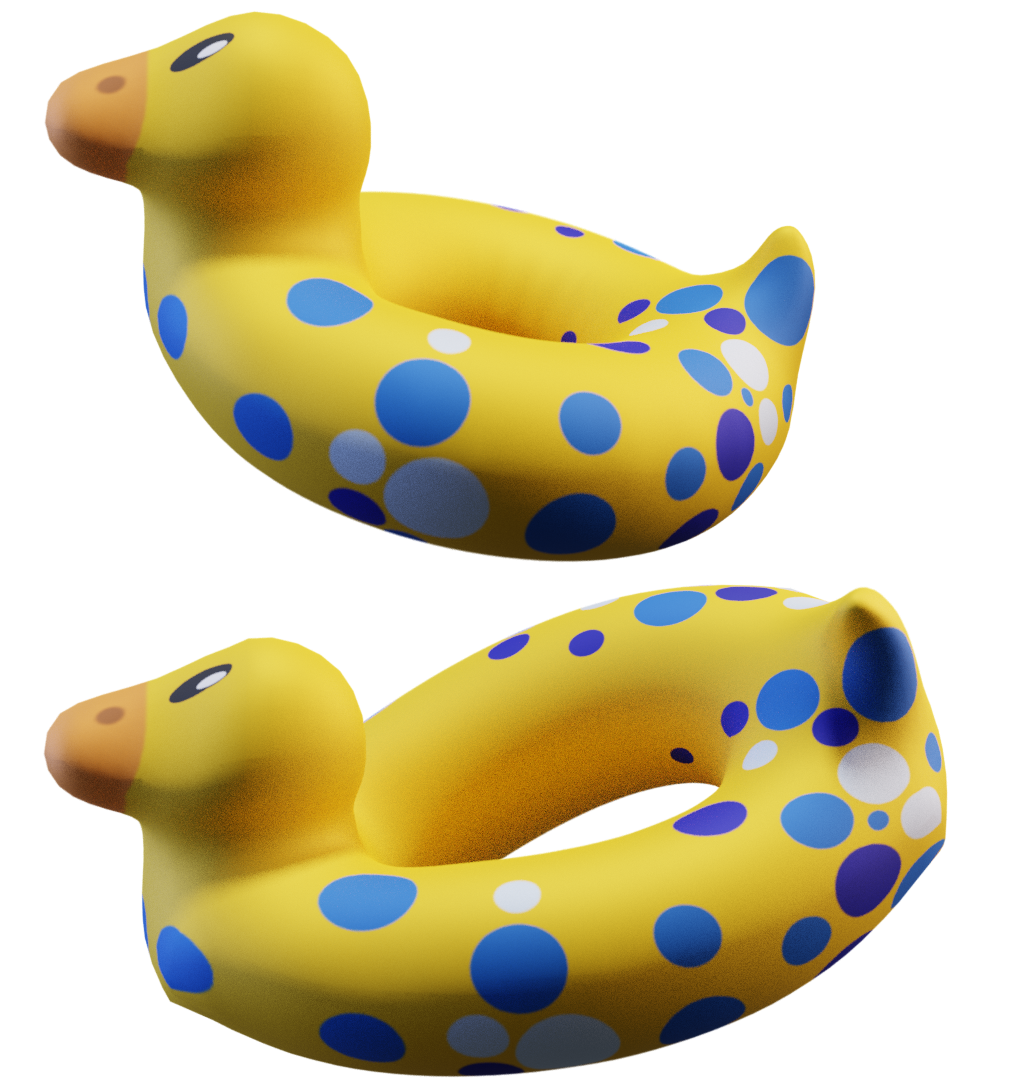}
        \caption{Controlled deformation of the Bob model with incompressible
            neo-Hookean material by fixing the head and moving the tail. Top:
            rest shape. Bottom: deformed shape. Model created by Keenan Crane.
        }
        \label{fig:summary:c}
    \end{subfigure}
    \hfill
    \begin{subfigure}[t]{0.245\linewidth}
        \centering
        \captionsetup{width=.8\linewidth}
        \includegraphics[width=\linewidth]{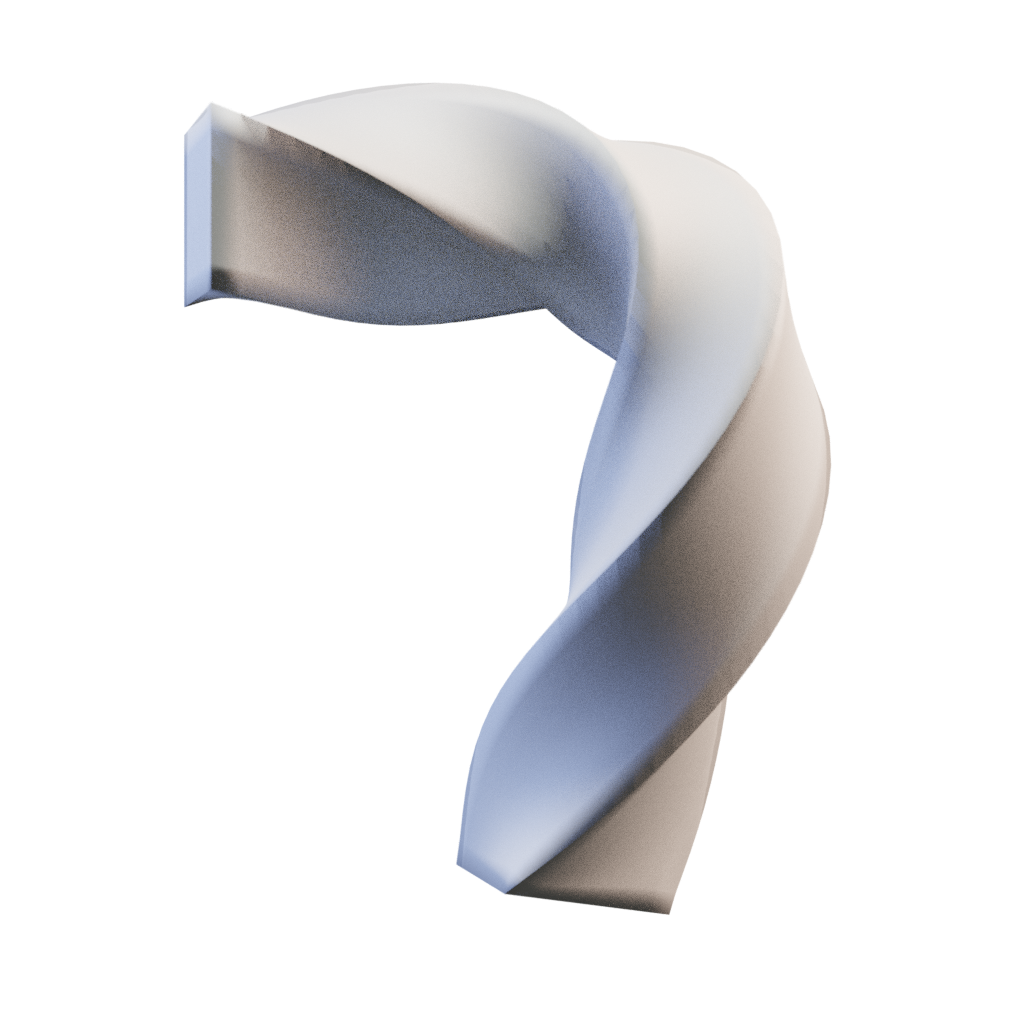}
        \caption{Controlled deformation via twisting and bending a horozontal
            bar with the As-Rigid-As-Possible (ARAP) energy.}
        \label{fig:summary:d}
    \end{subfigure}
    \caption{We present SANM, an open-source framework that automates and
        generalizes the Asymptotic Numerical Method (ANM) to solve symbolically
        represented nonlinear systems via numerical continuation and
        higher-order approximations. We apply SANM to static equilibrium
        problems via continuation on gravity (\ref{fig:summary:a} and
        \ref{fig:summary:b}) and controlled mesh deformation problems with an
        implicit homotopy formulation via continuation on control handles
        (\ref{fig:summary:c} and \ref{fig:summary:d}). SANM takes a symbolic
        representation of the nonlinear system from the user (\cref{code:build})
        and automatically handles all the complexity of applying ANM.
    }
    \label{fig:summary}
\end{teaserfigure}


\maketitle

\newcommand{\padeIterReduce}{$1.14\pm0.57$}
\newcommand{\padeNrCases}{36}
\newcommand{\overallSpeedup}{1.91}
\newcommand{\gmeanSpeedup}{1.97}
\newcommand{\gmeanSpeedupGravity}{2.21}
\newcommand{\gmeanSpeedupDeform}{1.41}
\newcommand{\speedupNrCases}{24}
\newcommand{\sparseSolverTimeUsed}{$23.43\%\pm1.56\%$}

\section{Introduction}

Solving nonlinear analytic systems (systems that can be locally described by a
convergent power series) is at the core of many graphics and engineering
applications. Such systems are traditionally solved with Newtonian methods that
essentially use a first or second order local approximation. Newtonian methods
converge quadratically fast when such an approximation is accurate enough, and
the initial guess is sufficiently close. However, these assumptions are often
violated in practice, and the convergence is thus much slower~\citep{
bonnans2006numerical}.

This paper considers solving the system $\vf(\vx)+\vv=\vz$ under a numerical
continuation framework~\citep{ allgower2003introduction}. Here $\vx\in\real^n$
is an unknown vector, $\vf:\real^n\mapsto\real^n$ is an analytic function, and
$\vv$ is a constant.  Given an initial solution $\vxz$ such that $
\vf(\vxz)=\vz$, numerical continuation methods trace the final solution via
solving $\vx(\lambda)$ with $\lambda$ ranging from $0$ to $1$ subject to $\vf(
\vx(\lambda)) + \lambda\vv = \vz$. For example, in the static elasticity
equilibrium problem, we encode the unknown node coordinates in $\vx$, the
mapping from node coordinates to node forces in $\vf(\cdot)$, and the static
external force in $\vv$. By setting $\vxz$ as the rest shape, numerical
continuation corresponds to gradually increasing the external force while
simulating the deformation simultaneously. \cref{fig:armadillo-g} presents an
example.

\begin{figure*}[t]
    \centering
    \includegraphics[width=.95\textwidth]{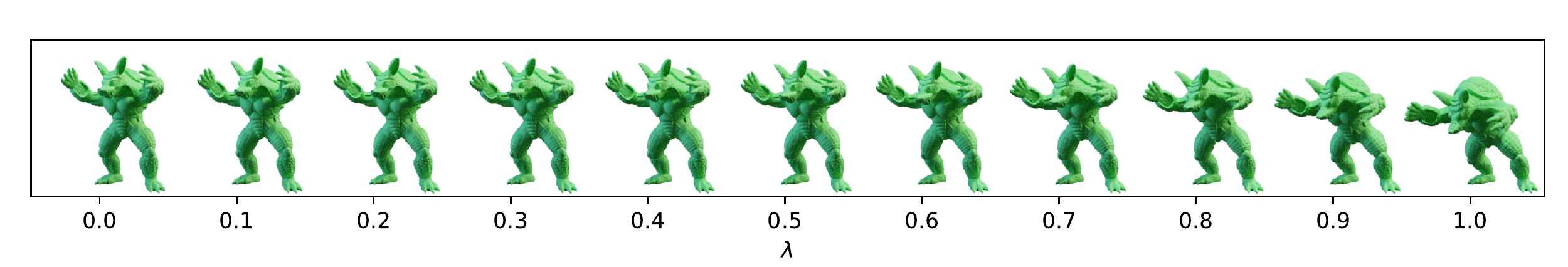}
    \caption{An example of solving gravity equilibrium with numerical
        continuation. Starting from an initial shape $\vx(0)$, we solve
        $\vx(\lambda)$ for $\lambda$ ranging from $0$ to $1$ subject to
        $\vf(\vx(\lambda))+\lambda\V{g}=\vz$ where $\vf(\cdot)$ computes node
        elastic forces given node coordinates, and $\V{g}$ is the per-node
        gravity vector. ANM parameterizes both $\vx$ and $\lambda$ with $a$, and
        approximates $\vx(a)$ and $\lambda(a)$ with power series. ANM allows
        easily computing the intermediate equilibrium states almost for free.
        \label{fig:armadillo-g}
    }
\end{figure*}

The Asymptotic Numerical Method (ANM)~\citep{damil1990new} is a numerical
continuation method that differs fundamentally from Newtonian approaches by
exploring the landscape of the nonlinear system via higher-order approximations.
The numerical continuation formulation $\vf(\vx(\lambda)) + \lambda\vv = \vz$
defines a solution curve $(\vx(\lambda),\, \lambda)$. However, parameterization
of the curve using $\lambda$ can result in ill-conditioned behavior of
$\vx(\lambda)$. Instead, ANM parameterizes both $\vx$ and $\lambda$ with $a$
such that $\vf(\vx(a)) + \lambda(a)\vv = \vz$. ANM approximates the solution
curve with a power series expansion at truncation order $N$: $\vx(a) =
\sum_{i=0}^{N}\vxi a^i$ and $\lambda(a) = \sum_{i=0}^N \lambda_i a^i$.
\citet{cochelin1994path} proposes a continuation technique to compute the final
solution in a stepwise manner. Specifically, they estimate the valid range of
the current approximation as $a_r$ and compute the final solution by iteratively
recomputing the approximation at $\vxz=\vx(a_r)$ and $\lambda_0=\lambda(a_r)$
until $\lambda(a_r)$ reaches $1$.

Compared to the widely used Newtonian methods, ANM is able to determine a local
representation of the solution curve with a larger range of validity in
similar computing time~\citep{cochelin1994asymptotic}, and therefore solves the
nonlinear system in fewer iterations and often less running time.

The core challenge in applying ANM is to solve the expansion coefficients
$\{\vxi\}$ and $\{\lambda_i\}$. The original ANM framework~\citep{ damil1990new}
analytically solves the coefficients for quadratic functions $\vf(\vx) = \vxz +
\V{A}\vx + \trans{\vx}\V{Q}\vx$. \citet{ chen2014asymptotic} deploys ANM on the
inverse deformation problem for 3D fabrication by manually deriving the
coefficient solution for the incompressible neo-Hookean elasticity~\citep{
bonet_wood_2008}. They claim their Taylor coefficient derivation as a major
contribution, which is a difficult and laborious task. They have also shown that
ANM converges up to orders of magnitudes faster than Newtonian solvers.

To date, however, there is no scalable tool that automates the computation of
Taylor coefficients in the general case. The lack of such tools severely limits
the application of ANM to new problems. In this work, we show how to solve the
coefficients $\{\vxi\}$ and $\{\lambda_i \}$ automatically and efficiently for a
symbolically defined function $\vf(\cdot)$. More specifically, we devise
techniques to establish the connection of Taylor expansion coefficients between
$\vx(a)$ and $\qty(\vf \circ \vx)(a)$, which in fact conforms to an affine
relationship for the highest-order term. We analyze a few operators important
for graphics applications, including elementwise analytical functions (such as
power and logarithm), matrix inverse, matrix determinant, and singular value
decomposition. We speed up the system with batch computing that fits naturally
into Finite Element Method (FEM) due to the same computation form shared by all
quadrature points. We present a system, called SANM, as an implementation of our
techniques.

We deploy SANM to solve the forward and inverse static force equilibrium problems
similar to \citet{chen2014asymptotic}. In contrast to their manual derivation
that only works with incompressible neo-Hookean materials, our system allows
easily solving more constitutive models by changing a few lines of code,
including the compressible neo-Hookean model that has a logarithm term and the
As-Rigid-As-Possible energy that involves a polar decomposition. Our
experimental results show that SANM achieves comparable or better performance as
the hand-coded, specialized ANM solver of \citet{chen2014asymptotic}. We are
unaware of efficient alternative methods for the inverse problem other than ANM.
For the forward problem, an alternative is to minimize the total potential
energy, and we show that SANM exhibits better performance than Newtonian energy
minimizers.

We further extend the ANM framework to incorporate implicit homotopy
$\vH(\vx,\,\lambda)=\vz$ where $\vH:\real^{n+1}\mapsto\real^n$ admits a
one-dimensional solution curve. For instance, we formulate the controlled mesh
deformation as an implicit homotopy problem, defined as $\vH(\vx,\,\lambda) =
\vf([\vx;\; \vx_c+ \lambda\V{\Delta}_x])$, where $\vx_c$ corresponds to the
initial location of control handles, $\V{\Delta}_x$ describes their
user-specified movement, and $\vf(\cdot)$ computes the internal elastic force.
The coordinates of unconstrained nodes in the deformed equilibrium state,
denoted by $\vx^*$, are then governed by $\vH(\vx^*,\,1)=\vz$ and can be solved
by continuation on $\lambda$. In our experiments, SANM runs
\gmeanSpeedupDeform~times faster by geometric mean than Newtonian energy
minimization methods. We also demonstrate the robustness and versatility of SANM
by twisting and bending a bar to extreme poses, as shown in
\cref{fig:summary:d}. Note that ANM and Newtonian minimization methods target
different problems and can not replace each other. \cref{sec:meth-discuss}
further discusses their differences.

To summarize, this paper makes the following contributions:
\begin{enumerate}
    \item We devise analytical solutions for Taylor coefficient propagation
        through a few nonlinear operators on which ANM has not been applied,
        including singular value decomposition as a challenging case
        (\cref{sec:single-opr}).
    \item We present a system, SANM, that automatically solves the Taylor
        expansion coefficients for symbolically defined functions (%
        \cref{sec:framework}). SANM greatly reduces programming effort for
        adopting ANM-based methods (\cref{sec:API}). SANM adopts generic
        and FEM-specific optimizations to improve solving efficiency further.
    \item We present a novel continuation algorithm to reduce accumulated
        numerical error and approximation error when solving the equational form
        $\vf(\vx)+\vv=\vz$ (\cref{sec:continue-with-eqn}).
    \item We extend ANM to handle implicit homotopy and apply it to controlled
        mesh deformation problems (\cref{sec:controlled-deform}). Our
        experiments show that SANM often converges faster than a
        state-of-the-art Newtonian energy minimizer. Moreover, the numerical
        continuation framework of SANM directly handles constitutive models that
        do not support inverted tetrahedrons, which would be challenging for
        energy minimization methods due to undefined elastic energy at the
        initial guess.
\end{enumerate}

SANM is available at \url{https://github.com/jia-kai/SANM}.


\section{Related Work}

\paragraph{Numerical Optimization:} Numerical optimization has been extensively
studied, and it is closely related to solving nonlinear systems. For example, we
can recast solving $\vf(\vx)=\vz$ as minimizing $g(\vx) = \trans{\vf(\vx)}
\vf(\vx)$ and apply generic minimization methods such as the Levenberg–Marquardt
algorithm. On the other hand, minimizing $f(\vx)$ can often be approached via
solving $\V{\nabla} f(\vx) = \vz$. For controlled mesh deformation problems, the
internal elastic force corresponds to the gradient of the elastic potential
energy with respect to node locations. Therefore, one can either directly solve
a force equilibrium under Dirichlet boundary conditions (as done by SANM) or
minimize the total potential energy to obtain the deformed state.  We review the
development of As-Rigid-As-Possible (ARAP) energy minimization as an example of
improvements on numerical optimizers.  \citet{ sorkine2007rigid} devises a
surface modeling technique by minimizing the ARAP energy via alternating between
fitting the rotations and optimizing the locations. \citet{ chao2010simple}
employs a Newton trust region solver to minimize the ARAP energy. \citet{
shtengel2017geometric} accelerates the convergence by computing a positive
semidefinite Hessian via constructing a convex majorizer for a specific class of
convex-concave decomposable objectives, including the 2D ARAP energy.  \citet{
smith2019analytic} presents analytical solutions for the eigensystems of
isotropic distortion energies to enable easily projecting the Hessians of 2D and
3D ARAP energies to be positive semidefinite to speed up the convergence. Most
optimization methods inherently build on the classic idea of using first or
second order approximations and exploit problem-specific optimization
opportunities. This work targets generic nonlinear solving with numerical
continuation and uses higher-order approximation.

\paragraph{Mesh Deformation:} Mesh deformation control is an important and
widely studied problem in graphics. For animation production that only requires
plausible but not physically accurate results, the simulation performance can be
improved by a variety of approaches such as model analysis \citep{choi2005modal,
kim2009skipping}, skinning \citep{gilles2011frame}, and constraint projection
\citep{bender2014survey, bouaziz2014projective}. For physically predictive
simulations, we need to stick to the formulation derived from continuum
mechanics strictly, and the solver convergence rate is often improved by Hessian
modification~\citep{shtengel2017geometric, kim2020dynamic}. In this paper, we
choose physically accurate elastic deformation as our target application. We
approach the problem by solving a nonlinear system that encodes force
equilibrium constraints.

\paragraph{Numerical Continuation Methods:} Classic numerical continuation
methods include the predictor-corrector method and the piecewise-linear method.
\citet{allgower2003introduction} provides an introduction to this topic. The
basic idea, which is to follow a solution trajectory by taking small steps, has
become popular in many applications such as motion planning~\citep{
yin2008continuation,duenser2020robocut}, MRI reconstruction~\citep{
trzasko2008highly}, and drawing assistance~\citep{ limpaecher2013real}. These
works typically choose a fixed step size or adopt a problem-specific step size
schedule in the predictor and use classic first or second order solvers as the
corrector. By contrast, asymptotic numerical methods use a higher-order
approximation as the predictor without needing a corrector and adaptively choose
the step size according to how well the predictor approximates the system.

\paragraph{Asymptotic Numerical Methods:} ANM has been applied to solve
engineering problems in different domains, including buckling analysis \citep{
azrar1993asymptotic, boutyour2004bifurcation}, vibration analysis \citep{
azrar2002non, daya2001numerical}, shell and rod simulation \citep{
zahrouni1999computing,  lazarus2013continuation}, and inverse deformation
problems \citep{chen2014asymptotic}. ANM assumes a quadratic system.  An
improvement over the standard ANM framework is to increase the range of validity
of the approximation via imposing heuristics on the function behavior, such as
replacing the power series with a Pad{\'e} representation \citep{
najah1998critical, cochelin1994asymptoticpade, elhage2000numerical}. When
adapting ANM to new problems, one typically needs to recast their specific
problems into quadratic forms by introducing auxiliary variables and deriving
the expansions manually \citep{ guillot2019generic}.  \citet{
abichou2002asymptotic} presents a review on adapting ANM for a few nonlinear
functions.

Few attempts have been made to automate ANM to handle general nonlinearities.
Notably, \citet{charpentier2008diamant} proposes an automatic differentiation
framework, called Diamant, that computes the expansion coefficients by Taylor
coefficient propagation via computing higher-order derivatives of the operators.
Unfortunately, the Diamant approach is not readily applicable to mesh
deformation problems due to the difficulty in computing higher-order derivatives
of certain matrix functions such as matrix inverse or determinant used in the
constitutive models. Moreover, Diamant is not designed with high-performance
computing in mind. It only works with scalar variables, does not take advantage
of the structural sparsity in FEM problems, and is only evaluated on small-sized
problems. \citet{lejeune2012object} incorporates the Diamant approach into an
object-oriented solver to automate ANM. By contrast, SANM natively works with
multidimensional variables and is accelerated with batch computing for
large-scale FEM problems. SANM also implements a generic framework for computing
the expansion coefficients, which is not limited to the higher-order derivative
approach of Diamant and is capable of handling challenging matrix functions.


\section{ANM Background}
This section introduces the asymptotic numerical method. We begin with a toy
example of a geometry problem and then formally describe ANM. We first define
the notations used in this paper in \cref{tab:notation}.

\begin{table}[hbt]
    \centering
    \begingroup
    \setlength{\tabcolsep}{2pt}
    \caption{Notation definition}
    \label{tab:notation}
    \begin{tabular}{lp{0.78\columnwidth}}
        $f$, $x$ & Scalars or scalar-valued functions \\
        $\vf$, $\vx$ & Vectors or vector-valued functions \\
        $f_i$, $x_i$ & A scalar in the vector at given index. For a function
            $f(\cdot)$, we also use $f_i$ to represent its $\nth{i}$ Taylor
            coefficient, and similarly for $\vf(\cdot)$ vs $\vfi$ and
            $\vF(\cdot)$ vs $\vFi$. \\
        $\vfi$, $\vxi$ & A vector in an array of vectors \\
        $\vF$, $\vX$ & Matrices or matrix-valued functions \\
        $\V{F_i}$, $\V{X_i}$ & A matrix in an array of matrices \\
        $X_{ij}$ & A coefficient in the matrix at given row and column \\
        $\matrow{\vX}{i}$, $\matcol{\vX}{j}$ & The vectors corresponding to the
             $\nth{i}$ row or the $\nth{j}$ column in matrix $\vX$ \\
        $ \rms(\vx) $ & Root-mean-square of $\vx \in \real^n$:
            $\rms(\vx) = \sqrt{\nicefrac{\trans{\vx}\vx}{n}}$ \\
        $\vect(\vX)$ & Flatten a matrix $\vX$ into a column vector by
            concatenating the columns in $\vX$ \\
        $\norm{\vX}$ & Frobenius norm of the matrix $\vX$, defined as
            $\sqrt{\trans{\vect(\vX)}{\vect(\vX)}}$ \\
        $\diag(\vX)$ & A vector containing the diagonal coefficients of $\vX$ \\
        $o(x)$ & The little-o notation: $y=o(x)$ if $\nicefrac{y}{x} \to 0$.
    \end{tabular}
    \endgroup
\end{table}

\subsection{A Circle-ellipse Intersection Problem}
\begin{figure}[h]
    \centering
    \includegraphics[width=.8\columnwidth]{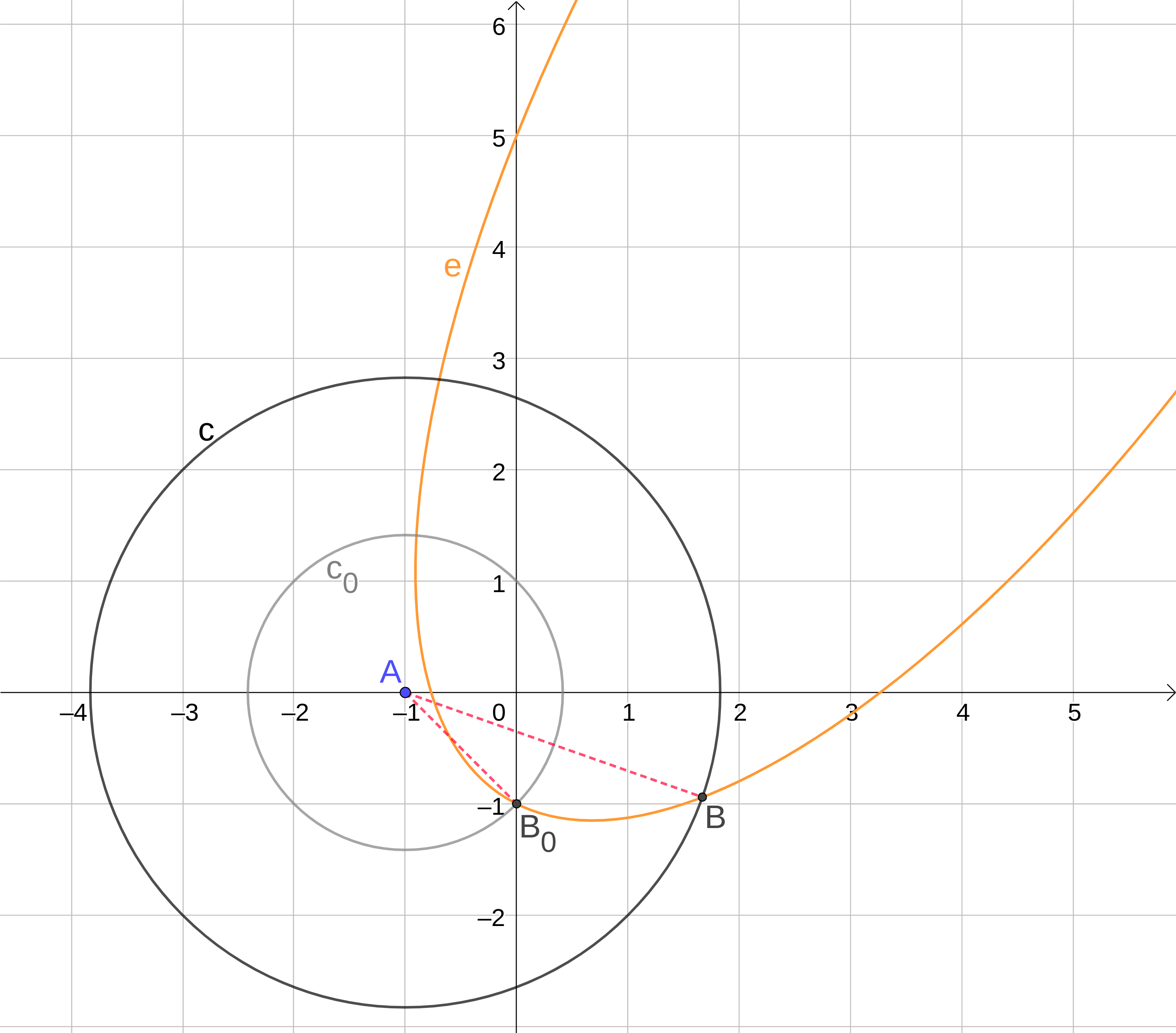}
    \caption{Our circle-ellipse intersection example problem. ANM uses
        polynomials to approximate the trace of intersection points between the
        ellipse and concentric circles of radius from $\sqrt{2}$ to $\sqrt{8}$,
        which is the arc $\wideparen{\V{B_0}\V{B}}$.}
    \label{fig:geo-example}
\end{figure}

We illustrate ANM with a toy problem that asks for the intersection $\V{B}$ of
an ellipse $e$ and a circle $c$ as shown in \cref{fig:geo-example}. The ellipse
intersects the y-axis at $\V{B_0}=(0,\,-1)$. The circle is centered at
$A=(-1,\,0)$ and its radius is $|\V{A}\V{B}|=\sqrt{8}$. To solve the problem
with ANM, we first choose a continuation scheme. We start with a smaller
concentric circle $c_0$ with radius $|\V{A}\V{B_0}|=\sqrt{2}$ and continuously
trace its intersection with $e$ while increasing its radius from $\sqrt{2}$ to
$\sqrt{8}$. Note that the continuation scheme is problem-specific. While there
can be many choices, practical problems typically admit a ``natural'' choice,
such as the external force in static equilibrium problems.

Formally, our goal is to solve $(x,\,y)$ such that $f_e(x,\,y) = f_c(x,\,y)=0$,
where $f_e(\cdot)$ and $f_c(\cdot)$ describe the ellipse $e$ and the circle $c$
respectively:
\begin{align}
    \arraycolsep=.5ex
    \begin{array}{rl}
        f_e(x,\,y) =& 2x^2 - 5x + y^2 - 4y - 2xy - 5 \\
        f_c(x,\,y) =& (x+1)^2 + y^2 - 8
    \end{array}
\end{align}

ANM introduces a variable $\lambda\in[0,\,1]$ to represent the continuation.
ANM traces the solution curve starting at $(x_0,\,y_0)$ via varying $\lambda$
from $0$ to $1$ while keeping the following equations satisfied:
\begin{align}
    \arraycolsep=.5ex
    \begin{array}{rl}
        f_e(x,\,y) &= 0\lambda \\
        f_c(x,\,y) + 6 &= 6\lambda \\
        (x_0,\,y_0) &= \V{B_0} = (0, \,-1)
    \end{array}
    \label{eqn:geo-example-sys}
\end{align}

Geometrically, ANM continuously solves the intersection between the ellipse and
a concentric circle with radius $\sqrt{2+6\lambda}$. ANM parameterizes the
solution curve by a variable $a$ and approximates $x(a)$, $y(a)$, and
$\lambda(a) $ with polynomial expansions at truncation order $N$, with
coefficients $\{x_k\}$, $\{y_k\}$, and $\{\lambda_k\}$ to be solved:
\begin{align}
    \arraycolsep=.5ex\def\arraystretch{1.5}
    \begin{array}{rl}
        x(a) &= 0 + \sum_{k=1}^N x_k a^k \\
        y(a) &= -1 + \sum_{k=1}^N y_k a^k \\
        \lambda(a) &= 0 + \sum_{k=1}^N \lambda_k a^k \\
    \end{array}
    \label{eqn:geo-example-expand}
\end{align}

\begin{figure}[t]
    \centering
    \includegraphics[width=.8\columnwidth]{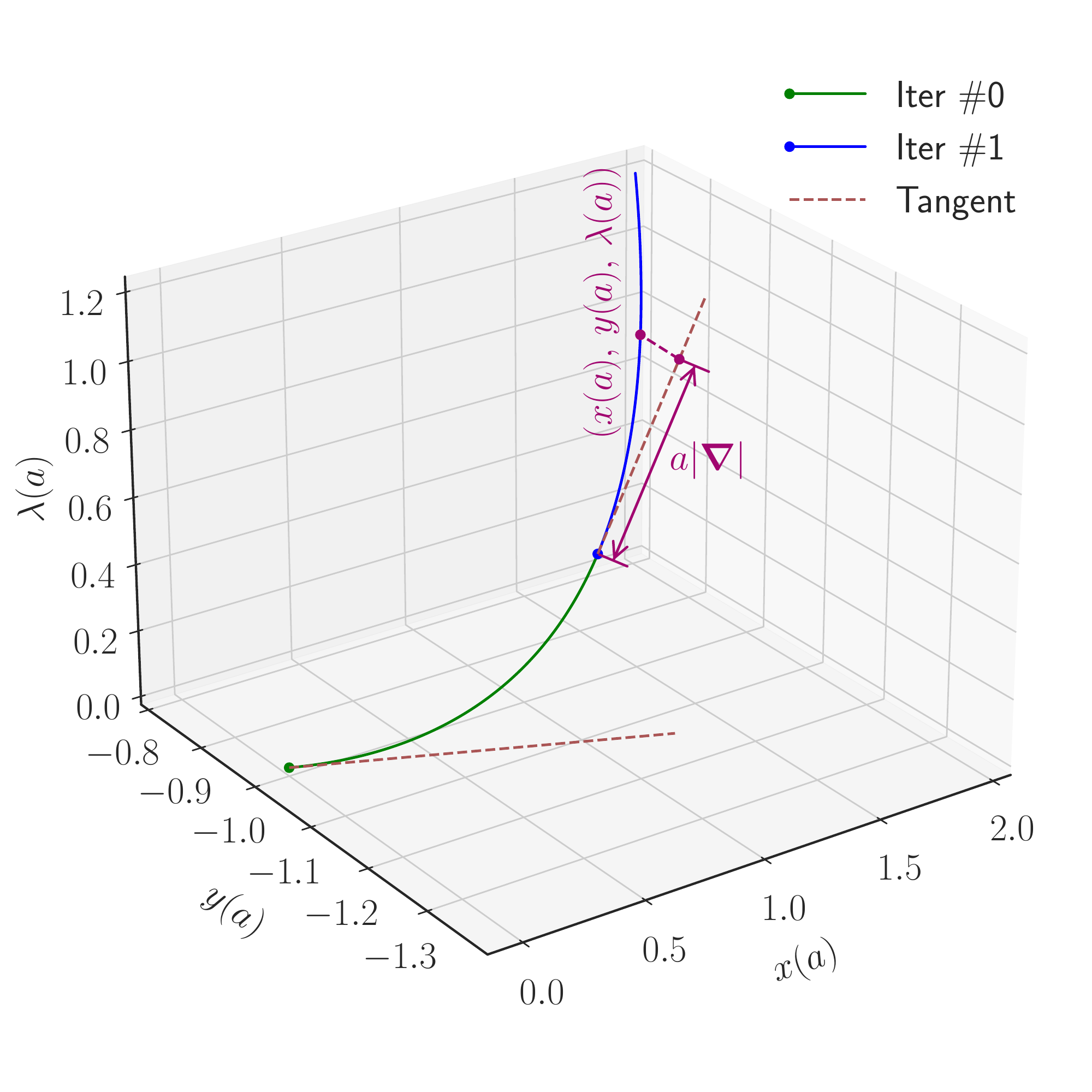}
    \caption{ANM solution curve for the circle-ellipse intersection problem. ANM
        solves the problem in two iterations. The path parameter $a$ is
        identified by the projection of a point $(x(a),\,y(a),\,\lambda(a))$
        onto the tangent direction. The length of the projection is
        $a|\V{\nabla}|$ where $\V{\nabla}=(x'(0),\,y'(0),\,\lambda'(0))$ is the
        gradient at the beginning of the current iteration.
    }
    \label{fig:geo-example-anm}
\end{figure}

We iteratively solve the coefficients by introducing the lower-order terms in
\eqnref{geo-example-expand} into \eqnref{geo-example-sys}. We start with $(x_1,
\, y_1,\, \lambda_1)$ and introduce $x=x_1a$, $y=-1+y_1a$, and $\lambda=
\lambda_1 a$ into \eqnref{geo-example-sys}:
\begin{align}
    \arraycolsep=.5ex
    \begin{array}{rll}
        f_e(x_1a,\,y_1a - 1) &= -(3x_1 + 6y_1)a + o(a) &= 0 \\
        f_c(x_1a,\,y_1a - 1) + 6 &= (2x_1-2y_1)a + o(a) &= 6\lambda_1a \\
    \end{array}
    \label{eqn:geo-example-c1}
\end{align}

We obtain two linear constraints on the three unknowns by equating the
coefficient of $a$ in \eqnref{geo-example-c1}. Let $\V{u}(a)=[x(a);\;y(a);\;
\lambda(a)]$ denote the solution curve. ANM further identifies the path
parameter $a$ as the pseudo-arclength that is the projection of the path along
its tangent direction as shown in \cref{fig:geo-example-anm}, specifically
$a=\trans{\qty(\V{u}(a)-\V{u}(0))} \V{u}'(0)$, which provides the third
constraint for a full-rank system:
\begin{align}
    \arraycolsep=.5ex
    \left\{
    \begin{array}{rl}
        -3x_1 - 6y_1 &= 0 \\
        2x_1-2y_1- 6\lambda_1 &= 0 \\
        x_1^2 + y_1^2 + \lambda_1^2 &= 1
    \end{array}
    \right.
    \label{eqn:geo-example-c1full}
\end{align}

We require $\lambda_1$ to be positive so that $\lambda(a)$ is a locally
increasing function at $0$, and the solution of \eqnref{geo-example-c1full} is
$x_1=\nicefrac{2}{\sqrt{6}}$, $y_1=-\nicefrac{1}{\sqrt{6}}$, and
$\lambda_1=\nicefrac{1}{\sqrt{6}}$. We then solve $(x_2,\,y_2,\,\lambda_2)$ by
equating the coefficients of $a^2$ in $f_e(x_2a^2 + \nicefrac{2}{\sqrt{6}}a,
\,y_2a^2 -\nicefrac{1}{\sqrt{6}}a -1) = 0$ and $f_c(x_2a^2 +
\nicefrac{2}{\sqrt{6}}a,\,y_2a^2 -\nicefrac{1}{\sqrt{6}}a -1) = 6(\lambda_2a^2 +
\nicefrac{1}{\sqrt{6}}a)$. Expanding the equations results in two linear
constraints for the three unknowns. The third pseudo-arclength constraint is
$x_1x_k + y_1y_k + \lambda_1\lambda_k = 0$ for $k \ge 2$. Repeating this step,
we can solve the coefficients $\{x_k\}$, $\{y_k\}$, and $\{\lambda_k\}$ that
define the polynomials $x(a)$, $y(a)$, and $\lambda(a)$. As will be shown in
\cref{sec:linearity}, the equations for all $(x_k,\,y_k,\,\lambda_k)$ are
linear, and the coefficients in these equations are the gradients of
$f_e(\cdot)$ and $f_c(\cdot)$ evaluated at $(x_0,\,y_0)$.

We then estimate $a_r$, the range of validity of the polynomial approximations
$x(a)$, $y(a)$, and $\lambda(a)$. We iteratively compute a new approximation at
$(x(a_r),\,y(a_r))$ to extend the solution curve until $\lambda(a_r)\ge1$, and
we compute the final solution $(x(a^*),\,y(a^*))$ with $a^*=\lambda^{-1}(1)$.
ANM with truncation order $N=20$ is able to find the circle-ellipse intersection
with a residual (defined as $\sqrt{\nicefrac{\qty(f_e^2(\vx) + f_c^2(\vx))}{2}}
$) of $2\times10^{-6}$ in two iterations. \cref{fig:geo-example-anm} visualizes
the parametric solution curve. The continuation formulation presented in
\cref{sec:continue-with-eqn} further reduces the residual to $7\times10^{-9}$.

An alternative is the Newton-Raphson method, which iteratively computes
$\V{x_{n+1}} = \V{x_n} - \V{J}(\V{x_n})^{-1}\vf(\V{x_n})$, where $\vf(\vx) =
[f_e(\vx);\; f_c(\vx)]$ and $\V{J}(\vx)$ is the Jacobian of $\vf$ evaluated at
$\vx$. Starting at $\V{x_0}=(0, \,-1)$, the Newton-Raphson method needs four
evaluations of the Jacobian to converge to a solution with a residual of
$8\times10^{-5}$.


\subsection{ANM Overview}
ANM aims to solve the nonlinear system $\vf(\vx)+\vv=\vz$ with numerical
continuation, where $\vf:\real^n \mapsto \real^n$ is an analytic function, and
$\vv$ is a constant. Starting from an initial solution $(\vxz,\,\lambda_0)$ such
that $\vf(\vxz) + \lambda_0\vv=\vz$, continuation methods compute an
approximation to trace the nearby solution curve of $\vf(\vx)+\lambda\vv=\vz$.
In the general case, the curve may not be well-conditioned under the
parameterization with respect to $\lambda$, and it is preferable to consider an
arclength parameterization $\vx(a)$ and $\lambda(a)$ where $a$ measures the
arclength or pseudo-arclength~\citep{ allgower2003introduction}.

ANM approximates $\vx(a)$ and $\lambda(a)$ by Taylor expansion at truncation
order $N$ such that $\vf(\vx(a)) + \lambda(a)\vv$ should be sufficiently close
to zero for small values of $a$:
\begin{align}
    \label{eqn:series-def}
    \arraycolsep=.5ex\def\arraystretch{1.5}
    \begin{array}{rl}
        \vx(a) &=  \sum_{i=0}^{N}\vxi a^i \\
        \lambda(a) &=  \sum_{i=0}^N \lambda_i a^i \\
        \text{s.t. } & \norm{\vf(\vx(a)) + \lambda(a)\vv}=o(a^N)
    \end{array}
\end{align}

We require $\lambda_1>0$ so that $\lambda(a)$ is locally increasing, and the
algorithm makes progress. We then estimate the range of validity $a_r$ such that
$\vx(a)$ and $\lambda(a)$ are good approximations of the solution when
$|a|<a_r$. If $\lambda(a_r) \ge 1$, we can solve $a^*$ such that
$\lambda(a^*)=1$ and compute the final solution $x^* = \vx(a^*)$. Otherwise,
when $\lambda(a_r) < 1$, we recompute the power series approximation starting at
$\vxz=\vx(a_r)$ and $\lambda_0=\lambda(a_r)$ and repeat the above steps.

A simple method to estimate $a_r$, as suggested by \citet{cochelin1994path},
builds on the idea that within the range of validity, different orders of
approximation should behave similarly:
\begin{align}
    \frac{\norm{\vx(a)_{\text{order $N$}} -
    \vx(a)_{\text{order $N-1$}}}}{\norm{\vx(a)_{\text{order $N$}} - \vxz}}
    < \epsilon
\end{align}
which leads to an approximation
\begin{align}
    \label{eqn:rov-taylor}
    a_r \approx \qty( \epsilon\frac{\norm{\V{x_1}}}{\norm{\V{x_N}}} )^{
    \frac{1}{N-1}}
\end{align}

The equation $\lambda(a^*)=1$ can be solved by a univariate polynomial root
finding algorithm such as Brent's method~\citep{ brent2013algorithms}.

The remaining part of completing the ANM algorithm is to solve the coefficients
$\{\vxi\}$ and $\{\lambda_i\}$ efficiently, which is a core contribution of this
paper.

We solve the coefficients $\{\vxi\}$ and $\{\lambda_i\}$ iteratively. Assume we
are at the $\nth{k}$ iteration, where $\{\vxz,\ldots,\V{x_{k-1}}\}$ and $\{
\lambda_0,\,\ldots,\,\lambda_{k-1}\}$ have been solved. With $\V{x_k}\in\real^n$
and $\lambda_k\in\real$ currently unknown, we have the equation:
\begin{align}
    \vf(\vx(a))) + \lambda(a)\vv \approx
    \vf\qty(\sum_{i=0}^k \vxi a^i) +
    \qty(\sum_{i=0}^k \lambda_i a^i)\vv \approx \vz
\end{align}

Assume $\{\vfi\}$ are the Taylor coefficients of $\vf(\vx(a))$:
\begin{align}
    \vf(\vx(a)) \approx \vf\qty(\sum_{i=0}^k \vxi a^i) =
        \sum_{i=0}^k \vfi a^i + o(a^k)
    \label{eqn:xk-fk-taylor}
\end{align}

As will be shown in \cref{prop:linear}, for an analytic function $\vf(\cdot)$,
there is an affine relationship between $\vfk$ and $\vxk$, specifically $\vfk =
\vP(\vxz)\vxk + \vq(\vxz, \ldots, \V{x_{k-1}})$, where
$\vP(\vxz)\in\real^{n\times n}$ is the slope matrix and $\vq(\cdot)\in\real^n$
is the bias vector. By introducing this relationship into the original equation
and requiring the coefficient of $a^k$ to be zero, we obtain a linear system
that restricts $\vxk$ and $\lambda_k$:
\begin{align}
    \label{eqn:xk-lambdak-f}
    \vP(\vxz)\vxk + \lambda_k \vv = -\vq(\vxz,\, \ldots,\, \V{x_{k-1}})
\end{align}

However, the system has rank $n$ but there are $n+1$ unknowns because the
curve behavior with respect to its parameter $a$ is not fully constrained. To
obtain a full rank system, \citet{cochelin1994path} proposes to identify the
path parameter $a$ as the pseudo-arclength, similar to other numerical
continuation methods \citep{ allgower2003introduction}. Pseudo-arclength
approximates the arclength of a curve by projecting it onto the tangent space,
which constitutes the following constraint:
\begin{align}
    \label{eqn:a-is-proj}
    a = \trans{(\vx(a)-\vxz)}\V{x'}(0) +
        (\lambda(a)-\lambda_0) \lambda'(0)
\end{align}
Introduce \eqnref{series-def} into \eqnref{a-is-proj}:
\begin{align}
    \label{eqn:xk-lambdak-proj}
    \trans{\V{x_i}}\V{x_1} + \lambda_i\lambda_1 = \indicator{i=1}
\end{align}

The unknowns $\vxk$ and $\lambda_k$ can be solved by combining
\eqnref{xk-lambdak-f} and \eqnref{xk-lambdak-proj}. The solution is unique if we
further require $\lambda'(0) = \lambda_1 > 0$.


\subsection{Linearity Between Taylor Coefficients}
\label{sec:linearity}

Before proving the linearity between $\vfk$ and $\vxk$ in \eqnref{xk-fk-taylor},
we introduce an auxiliary definition:

\begin{definition}
    Define $\coeffof{k}{f(a)}$ to be the coefficient of $a^k$ in the Taylor
    expansion of an analytic function $f:\real\mapsto\real$ such that:
    \begin{align*}
        f(a) = \sum_{i\ge0} \coeffof{i}{f(a)} a^i
    \end{align*}
\end{definition}
\begin{prop}
    \label{prop:linear}
    Let $f:\real^n\mapsto\real$ be an analytic function, and $\vxz,\, \ldots,\,
    \V{x_{k-1}}$ known real-valued $n$-dimensional vectors. Assume coefficients
    $f_0,\, \ldots,\, f_k$ satisfy:
    \begin{align*}
        f\qty(\sum_{i=0}^k \vxi a^i) = \sum_{i=0}^k f_ia^i + o\qty(a^k)
    \end{align*}
    Then we have $f_k = \trans{\vp(\vxz)}\vxk + \vq(\vxz,\, \ldots,\,
    \V{x_{k-1}})$.  Specifically, $\vp(\vxz) = \grad{f}(\vxz)$ and $\vq(\vxz,\,
        \ldots,\,\V{x_{k-1}}) = \coeffof{k}{f\left(\sum_{i=0}^{k-1}\vxi a^i
        \right)}$.
\end{prop}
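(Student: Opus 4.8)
The plan is to expand $f$ around the known partial sum $\vx_{<k}(a) := \sum_{i=0}^{k-1}\vxi a^i$ and isolate the contribution of the unknown top-order term $\vxk a^k$. Write $\vx(a) = \vx_{<k}(a) + \vxk a^k$, and think of $f$ as a function composed with this curve. The key observation is that the term $\vxk a^k$ is already of order $a^k$, so when we Taylor-expand $f(\vx_{<k}(a) + \vxk a^k)$ in the ``perturbation'' $\vxk a^k$, every term involving two or more copies of $\vxk a^k$ contributes only at order $a^{2k}$ or higher, hence lands in $o(a^k)$ and is irrelevant. Only the zeroth- and first-order terms of this perturbative expansion matter modulo $o(a^k)$.

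Concretely, I would argue as follows. First, by multivariate Taylor's theorem (valid since $f$ is analytic),
\begin{align*}
    f\bigl(\vx_{<k}(a) + \vxk a^k\bigr)
    = f\bigl(\vx_{<k}(a)\bigr)
    + \trans{\grad{f}\bigl(\vx_{<k}(a)\bigr)}\,\vxk\, a^k
    + O(a^{2k}).
\end{align*}
Since $k \ge 1$, the remainder $O(a^{2k})$ is $o(a^k)$ and contributes nothing to the coefficient of $a^k$. Next, in the first-order term, $\grad{f}(\vx_{<k}(a)) = \grad{f}(\vxz) + O(a)$ because $\vx_{<k}(0) = \vxz$ and $\grad{f}$ is continuous (indeed analytic); multiplying the $O(a)$ part by $a^k$ again gives $o(a^k)$. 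Therefore the coefficient of $a^k$ coming from the first-order term is exactly $\trans{\grad{f}(\vxz)}\vxk$. Finally, the coefficient of $a^k$ coming from $f(\vx_{<k}(a))$ is by definition $\coeffof{k}{f(\sum_{i=0}^{k-1}\vxi a^i)}$, which does not involve $\vxk$. Collecting these two pieces and comparing with the hypothesis $f(\sum_{i=0}^k \vxi a^i) = \sum_{i=0}^k f_i a^i + o(a^k)$ yields
\begin{align*}
    f_k = \trans{\grad{f}(\vxz)}\vxk + \coeffof{k}{f\Bigl(\textstyle\sum_{i=0}^{k-1}\vxi a^i\Bigr)},
\end{align*}
which identifies $\vp(\vxz) = \grad{f}(\vxz)$ and $\vq(\vxz,\ldots,\V{x_{k-1}}) = \coeffof{k}{f(\sum_{i=0}^{k-1}\vxi a^i)}$, as claimed.

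The only delicate point is bookkeeping the error terms: one must be careful that ``$O(a^{2k})$'' and ``$O(a) \cdot a^k$'' are genuinely $o(a^k)$ uniformly, which requires invoking analyticity (or at least $C^2$ smoothness of $f$ near $\vxz$, with a bound on the Hessian along the compact parameter interval) so that the Taylor remainder is controlled. This is routine but should be stated carefully, since it is the crux of why the relationship is \emph{affine} rather than merely ``leading-order linear.'' A clean way to present it is to define $g(a) := f(\vx_{<k}(a) + a^k \vxk)$ as an analytic function of the single variable $a$ (with $\vxk$ as a parameter) and differentiate the composition, but the perturbative expansion above is the most transparent route and I would go with that.
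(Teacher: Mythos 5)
Your proof is correct, and it reaches the same conclusion as the paper's but organizes the argument differently. The paper works entirely at the level of formal power series: it writes $g(t) = f(\vxz + t) = \sum_{i\ge 0} g_i t^i$, substitutes the whole tail $t = \sum_{i\ge 1}\vxi a^i$, and then reasons combinatorially about which monomials can have total degree exactly $k$ and still contain $\vxk$. Since $\vxk a^k$ is already degree $k$, only the linear term $g_1\vxk a^k$ qualifies, and everything else is the ``bias'' obtained by deleting $\vxk$. This is pure coefficient extraction; because $f$ is analytic the power series is legitimate and no remainder estimates are needed. You instead split the \emph{argument} as $\vx_{<k}(a) + \vxk a^k$, Taylor-expand $f$ around the moving basepoint $\vx_{<k}(a)$, and bound the remainders: the quadratic-and-higher part is $O(a^{2k})$, and replacing $\grad f(\vx_{<k}(a))$ by $\grad f(\vxz)$ in the linear term costs only $O(a)\cdot a^k$. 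Both land on $f_k = \trans{\grad f(\vxz)}\vxk + \coeffof{k}{f(\sum_{i=0}^{k-1}\vxi a^i)}$. The trade-off is exactly what you flag at the end: your perturbative route makes the analytic reason for the affine structure (the perturbation is already high-order) very transparent, but it obliges you to justify that the $O$-terms are uniformly $o(a^k)$, which requires a locally bounded Hessian; the paper's degree-counting argument avoids this entirely by staying formal, at the cost of being slightly less intuitive about \emph{why} the linearity appears. Both are sound proofs of the proposition.
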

\begin{proof}
    We prove the univariate case for the simplicity of the notations. Our
    argument also applies to multivariate functions by using the multivariate
    Taylor theorem.

    Let $g(t) = f(x_0 + t) = \sum_{i\ge0} g_i t^i$ where $g_i=\frac{1}{i!}
    \eval{\dv[i]{g}{t}}_{t=0}$ is the Taylor expansion coefficient and $g_0=f_0
    = f(x_0)$. With $t=\sum_{i=1}^kx_ia^i$, we rewrite
    \begin{align*}
        f\qty(\sum_{i=0}^k x_i a^i) = g\qty(\sum_{i=1}^kx_ia^i) =
            g_0 + \sum_{i\ge1} g_i \qty(\sum_{j=1}^k x_ja^j)^i
    \end{align*}
    To compute $f_k$, we consider the terms that contribute to $a^k$ in the
    expansion of the right side hand:
    \begin{enumerate}
        \item For terms that contain $x_k$ and contribute to $f_k$, it must
            contain $x_ka^k$ and no other $x_ia^i$ for which $i>0$.  There is
            only one such term, which is $g_1x_ka^k$. The slope $p$ for which
            $f_k = px_k+q$ is thus $p=g_1=f'(x_0)$.
        \item The bias $q$ consists of terms that do not contain $x_k$, which
            can be computed by treating $x_k$ as zero, or equivalently removing
            $x_ka^k$ from $t$:
            \begin{align*}
                q = \coeffof{k}{f\qty(\sum_{i=0}^{k-1}x_ia^i)}
            \end{align*}
    \end{enumerate}
\end{proof}

\paragraph{Remarks:} We have discussed the case of scalar functions. For a
vector function $\vf:\real^n\mapsto\real^m$, the slope matrix $\vP$ is its
Jacobian. Note that $\vp(\vxz)$ only depends on the initial point $\vxz$ and
remains constant for all orders. This allows us to factorize the coefficient
matrix only once to solve all the terms. The result of \cref{prop:linear} is not
new.  For example, it is a direct consequence of Fa{\`a} di Bruno's formula
\citep{ roman1980formula}. Here we have presented a simple proof based on
elementary calculus.

\subsection{Continuation with Pad{\'e} Approximation}
\label{sec:pade}

The original ANM approximates $\vx(a)$ and $\lambda(a)$ with Taylor expansions.
It has been shown that replacing the Taylor expansions with Pad{\'e}
approximations results in a larger range of validity and thus fewer iterations.

For a scalar function $f:\real\mapsto\real$, its Pad{\'e} approximation of order
$M+N$ approximates the function with a ratio of two polynomials, $P_N(x)$ and
$Q_M(x)$ of degrees $N$ and $M$, respectively, such that $f(x) =
\frac{P_N(x)}{Q_M(x)} + o(x^{N+M})$. The polynomials $P_N(x)$ and $Q_M(x)$ are
determined by the first $N+M+1$ Taylor coefficients of $f$ via a set of linear
constraints up to a common scaling factor. Although the Pad{\'e} approximation
is constructed from the information contained in the Taylor expansion, for many
functions, it has a larger range of validity than the Taylor series, and it
sometimes gives meaningful results even when the radius of convergence of the
Taylor series is strictly zero \citep{ basdevant1972pade}.

\citet{cochelin1994asymptoticpade} proposes to construct a Pad{\'e}
approximation for the vector function $\vx(a)=\sum_{i=0}^N\vxi a^i$ in the form:
\begin{align}
    \vP_N(a) &= \vxz + \sum_{i=1}^{N-1}
        \frac{D_{N-1-i}(a)}{D_{N-1}(a)} \vxi a^i
        \nonumber \\
    D_k(a) &=  \sum_{i=0}^k d_i a^i
\end{align}
\citet{najah1998critical} presents the process of computing the coefficients
$\{d_i\}$ from $\{\vxi\}$, which first orthonormalizes $\{\vxi\}$ and then
solves $\{d_i\}$ based on the principle that the Pad{\'e} approximation should
share the same lower-order Taylor coefficients. We omit the details here.

We follow the techniques presented in \citet{ elhage2000numerical} to determine
the range of validity $a_p$ for this Pad{\'e} approximation:
\begin{align}
    \frac{\norm{P_N(a_p)-P_{N-1}(a_p)}}{\norm{P_N(a_p)-P_N(0)}} < \epsilon
\end{align}
The number $a_p$ is sought via bisection in the range $(a_r, r)$, where $a_r$ is
the range of validity of the Taylor series determined by \eqnref{rov-taylor},
and $r$ is the smallest positive real root of $D_{N-1}(a)$ that can be found by
numerical methods such as Bairstow algorithm \citep{golub1967generalized}. Note
that $P_{N-1}$ is not the first $N-1$ terms in $P_N$ but the Pad{\'e}
approximation computed from $\{\vxz,\,\ldots,\,\V{x_{N-1}}\}$. For all the
\padeNrCases~test cases presented in \cref{sec:application}, Pad{\'e}
approximation uses \padeIterReduce~fewer iterations than the original ANM
formulation on average.



\section{The SANM Framework}
\label{sec:framework}

This section presents the overall design of SANM and its two novel extensions
over ANM: the handling of implicit homotopy $\vH(\vx,\,\lambda)=\vz$ and a
formulation to reduce accumulated error when solving the equational form
$\vf(\vx) + \vv=\vz$. Note that this paper deals with both computing graphs and
mesh networks. We use \emph{vertex} to refer to a vertex in a graph and
\emph{node} to refer to a node in a polygon mesh.

\subsection{Coefficient Propagation on the Computing Graph}
\label{sec:coeff-prop}

\begin{algorithm}[t]
\caption{Taylor coefficient solver in SANM}
\label{algo:comp-coeff}
\begin{algorithmic}
    \State {\bfseries Input:} Computing graph $G=(V_o \cup V_v,\, E)$ \\
        \hspace{3em} that represents $\vH: \real^{n+1}\mapsto\real^n$
    \State {\bfseries Input:} Initial value $\vxz\in\real^n$ and $\lambda_0\in
        \real$ such that $\vH(\vxz,\,\lambda_0)=\vz$
    \State {\bfseries Input:} Truncation order $N$
    \State {\bfseries Output:} Taylor coefficients $\{\vxi\}$ and
        $\{\lambda_i\}$ such that \\
        \hspace{3em} $\norm{\vH\qty(\sum_{i=0}^N \vxi a^i,\,
            \sum_{i=0}^N \lambda_i a^i)} = o(a^N)$
    \vspace{1em}

    \State Compute $\vP \leftarrow \pdv{\vH}{\vx}\/(\vxz,\,\lambda_0)$ by
        reverse mode AD on $G$
    \State Meanwhile, record the Jacobians $\V{p^u}$ for each operator
        $u\in V_o$
    \State Meanwhile, record $\vv \leftarrow
        \pdv{\vH}{\lambda}\/(\vxz,\,\lambda_0)$
    \State Factorize $\vP$
    \For{$k \leftarrow 1$ to $N$}
        \State Compute the biases $\V{q_k^u}$ for $u\in V_o$ in
            topological order, \\
            \hspace{3em} given $\vxz,\, \lambda_0,\, \ldots,\,
            \V{x_{k-1}},\,\lambda_{k-1}$ and the rules in \cref{sec:single-opr}
        \State Compute $\V{q_k}$ by combining affine transformations
            $(\V{p^u},\,\V{q_k^u})$ \\
            \hspace{3em} for $u\in V_o$ in topological order
        \State Solve $\vxk$ and $\lambda_k$ according to
            \eqnref{xk-lambdak-proj} and \eqnref{xk-lambdak-H}:
            \begin{align*}
                \left\{
                \begin{array}{l}
                    \vP \vxk + \lambda_k\vv = -\V{q_k} \\
                    \trans{\vxk}\V{x_1} + \lambda_k\lambda_1 = \indicator{k=1}
                \end{array}
                \right.
            \end{align*}
    \EndFor
\end{algorithmic}
\end{algorithm}

SANM represents a nonlinear function $\vf(\cdot)$ as a directed acyclic
bipartite computing graph composed of predefined operators: $G=(V_o \cup
V_v,\,E)$. The user builds the graph to specify the nonlinear function in their
problem symbolically. The vertex set $V_o$ corresponds to a subset of predefined
operators offered by SANM: for $u \in V_o$, there is a function $f_u$ that
defines the corresponding computing.  Each operator takes a fixed number of
inputs, where each input can be a higher-order tensor. For example, matrix
inverse takes one matrix input, and vector addition takes two vector inputs. The
vertex set $V_v$ represents the variables. For a variable $v \in V_v$ and an
operator $u \in V_o$, an edge $(v,\,u) \in E$ if the user passes $v$ as an input
of the function $f_u$. An edge $(u,\,v) \in E$ if $v$ is an output of the
function $f_u$. Many successful symbolic-numerical systems, such as Theano
\citep{ 2016arXiv160502688short} and Tensorflow \citep{abadi2016tensorflow},
have adopted computing graphs to represent user-defined functions.

Besides working with the original ANM formulation $\vf(\vx)+\lambda\vv=\vz$,
SANM also supports solving the more general implicit homotopy: $\vH(\vx,\,
\lambda)=\vz$. Implicit homotopy defines a curve $\vx(\lambda)$. Our goal is to
solve $\vx(1)$ given an initial value $\vx(0)$.

\cref{prop:linear} implies that the $\nth{k}$ order expansion of the implicit
homotopy can be written as:
\begin{multline}
    \vH\qty(\sum_{i=0}^{k}\vxi a^i, \sum_{i=0}^{k}\lambda_i a^i) =
        \sum_{i=0}^{k-1}\V{H_i}a^i + \\
        \qty(
            \pdv{\vH}{\vx}\/(\vxz,\,\lambda_0) \cdot \vxk +
            \pdv{\vH}{\lambda}\/(\vxz,\,\lambda_0) \cdot \lambda_k +
            \V{q_k}
        ) a^k + \cdots
\end{multline}
By requiring the coefficient of $a^k$ to be zero, we obtain the equation
\begin{align}
    \label{eqn:xk-lambdak-H}
    \pdv{\vH}{\vx}\/(\vxz,\,\lambda_0) \cdot \vxk +
        \pdv{\vH}{\lambda}\/(\vxz,\,\lambda_0) \cdot \lambda_k +
        \V{q_k} = \vz
\end{align}

Given a user-defined function $\vH(\cdot)$ and initial values $\vH(\vxz,\,
\lambda_0)=\vz$, SANM computes the slope matrix $\vP = \frac{\partial\vH}{
\partial\vx}(\vxz,\,\lambda_0)$ via reverse mode Automatic Differentiation (AD)
\citep{atilim18automatic}. It then iteratively computes the biases for each
order and solves the Taylor coefficients according to \eqnref{xk-lambdak-proj}
and \eqnref{xk-lambdak-H}. The bias $\V{q_k}$ is computed by merging the affine
transformations of individual operators in $G$. \cref{algo:comp-coeff}
summarizes this process.


\subsection{Continuation in SANM}

\begin{algorithm}[t]
\caption{Continuation framework in SANM}
\label{algo:continue}
\begin{algorithmic}
    \State {\bfseries Input:} An analytic function
        $\vH: \real^{n+1}\mapsto\real^n$
    \State {\bfseries Input:} Initial value $\vxz\in\real^n$ and $\lambda_0\in
        \real$ such that $\vH(\vxz,\,\lambda_0)=\vz$
    \State {\bfseries Input:} Target location of the homotopy $\lambda_t >
        \lambda_0$
    \State {\bfseries Output:} The solution $\vxs \in \real^n$ such that
        $\vH(\vxs,\,\lambda_t) = \vz$
    \vspace{1em}

    \State $k \leftarrow 0$
    \While{$\lambda_k < \lambda_t$}
        \State Compute Taylor coefficients $\{\V{x_{k,i}}\}$ and
            $\{\lambda_{k,i}\}$ \\
            \hspace{3em} using \cref{algo:comp-coeff}
        \State Compute the corresponding Pad{\'e} approximation $P_N(a)$
        \State Compute $a_r$ and $a_p$, the range of validity \\
            \hspace{3em} for these two approximations
        \State $a_m \leftarrow \max(a_r,\, a_p)$
        \State Use Taylor or Pad{\'e} to approximate $\V{f_x}(a)$ and
            $f_\lambda(a)$, \\
            \hspace{3em} depending on which one achieves the range $a_m$
        \State Solve $a'$ such that $f_\lambda(a') = \min(f_\lambda(a_m), \,
            \lambda_t)$
        \State $(\V{x_{k+1}},\,\lambda_{k+1}) \leftarrow
            (\V{f_x}(a'),\,f_\lambda(a'))$
        \State $k \leftarrow k + 1$
    \EndWhile
    \State $\vxs \leftarrow \vxk$
\end{algorithmic}
\end{algorithm}

SANM aims to solve $\V{x^*}$ such that $\vH(\V{x^*}, \, \lambda_t) = \vz$ given
initial values $\vH(\vxz,\, \lambda_0)=\vz$. We typically have $\lambda_0=0$
and $\lambda_t=1$. Note that we have changed the meaning of $\vxk$ and
$\lambda_k$ to indicate the initial values at the $\nth{k}$ iteration rather than
the $\nth{k}$ Taylor coefficient as in \cref{sec:coeff-prop}.

Similar to the standard ANM procedure, at the $\nth{k}$ iteration, SANM uses
\cref{algo:comp-coeff} to compute the local Taylor expansion coefficients of
$\vx(a)$ and $\lambda(a)$ near $(\vxk,\,\lambda_k)$ so that $\vH(\vx(a),\,
\lambda(a)) \approx \vz$. SANM then estimates the range of validity $a_r$ using
the standard ANM formulation in \eqnref{rov-taylor}. SANM also computes a
Pad{\'e} approximation from the Taylor coefficients and estimates its range of
validity $a_p$ using techniques outlined in \cref{sec:pade}. SANM chooses the
approximation with a larger range of validity and computes the next
approximation to extend the solution path until $\lambda(a)$ reaches
$\lambda_t$. \cref{algo:continue} summarizes this process.


\subsection{Reducing Error When Solving The Equational Form}
\label{sec:continue-with-eqn}

When solving a nonlinear system in the original ANM formulation $\vf(\vx)+\vv =
\vz$, SANM adopts a novel continuation method to reduce accumulated numerical
and approximation error. The principle is to modify $\vv$ at each iteration to
incorporate the residual $\vf(\vxk) + \lambda_k\vv - \vz$.  Specifically,
instead of using a fixed definition $\vH(\vx,\,\lambda) = f(\vx)+\lambda\vv$, we
change the definition of $\vH$ in \cref{algo:continue} at the beginning of each
iteration:
\begin{align}
    \V{H_k}(\vx,\,\lambda) = \vf(\vx)
        +(\lambda - \lambda_k)\vv
        -(1 - (\lambda - \lambda_k))\vf(\vxk)
\end{align}

Note that the loop invariant $\V{H_k}(\vxk,\,\lambda_k)=\vz$ is still satisfied.
We use $\lambda - \lambda_k$ to represent the progress made in the current
iteration. The final solution is found if $\lambda - \lambda_k=1$. Let
$\V{r_k} \defeq \vf(\vxk) + \vv$ denote the residual. In the continuation we
solve $\V{H_k}(\V{x_{k+1}},\,\lambda_{k+1})\approx\vz$, which implies
\begin{align}
    \V{r_{k+1}} \approx (1 - (\lambda_{k+1} - \lambda_k))\V{r_k}
\end{align}

Therefore, the continuation decreases the residual. Updating $\V{H_k}$
at each iteration automatically accounts for numerical error and approximation
error. We also change the loop condition to $\norm{\V{r_k}} \ge \epsilon$ to
achieve the desired solution residual $\epsilon$. This formulation allows us to
obtain very accurate solutions such as the low residual RMS shown in
\cref{tab:expr-gravity} and \cref{tab:expr-deform}.



\section{Computing the Affine Transformations of Taylor Coefficients}
\label{sec:single-opr}

We present algorithms to determine the affine transformations of the
highest-order Taylor coefficient for a few nonlinear operators that are commonly
used in graphics applications. As we have shown in \cref{sec:coeff-prop}, these
operators can be combined into a computing graph to define the nonlinearity in
the target application. Although \cref{prop:linear} ensures the existence of the
linearity being sought in the general case, for certain operators, we can
compute the affine transformations directly without resorting to computing
higher-order derivatives. In this section, we use $f$ to represent the nonlinear
operator under investigation and also the value of this operator with $x$
assumed to be the independent variable. For any variable $x$, we use $\{x_i\}$
to denote its Taylor coefficients with respect to the path parameter~$a$.

Our goal is to derive an affine relationship between $f_k$ and $x_k$, assuming
that $x_0,\,\ldots,\, x_{k-1}$ and $f_0,\, \ldots,\, f_{k-1}$ are known
constants.

\subsection{Basic Arithmetic Operations}
The affine transformations for the four basic arithmetic operations are derived
by equating the coefficient of $a^k$ in the expansion of the equation:
\begin{itemize}
    \item $f=x+y$: By introducing $f=\sum_{k=0}^Nf_ka^k$, $x=\sum_{k=0}^Nx_ka^k$,
        and $y=\sum_{k=0}^Ny_ka^k$ and equating the coefficient of $a^k$, we
        have $f_k=x_k+y_k$.
    \item $f=x-y$: Similarly, we have $f_k=x_k-y_k$.
    \item $f=xy$: Similarly, we have $f_k=y_0x_k + x_0y_k +
        \sum_{i=1}^{k-1}x_iy_{k-i}$.
    \item $f=\nicefrac{x}{y}$: We have $x=fy$, which implies $x_k=f_0y_k +
        y_0f_k + \sum_{i=1}^{k-1}f_iy_{k-i}$, and therefore
        $f_k = \qty(\nicefrac{1}{y_0})x_k - \qty(\nicefrac{f_0}{y_0})y_k -
        \nicefrac{1}{y_0}\sum_{i=1}^{k-1}f_iy_{k-i}$.
\end{itemize}

\subsection{Elementwise Analytic Functions}
A function $\V{y}=\vf(\V{x})$ is said to be elementwise if $\forall i: y_i =
f(x_i)$. For such functions, we only need to derive the affine transformation
for the univariate case.

The problem of computing the Taylor coefficients of $f(\sum_{i=0}^k x_i a^i)$
given the coefficients $\{x_i\}$ and the Taylor expansion of $f(x_0+a)$ is known
in the literature as the composition problem, for which there exists a fast
$O((k\log k)^{3/2})$ algorithm \citep{ brent1978fast}. However, it is not
necessarily fast when $k$ is small, and the implementation is complicated.

As shown in \citet{griewank2008evaluating}, for most functions of practical
interest, we can find auxiliary functions $a(x)$, $b(x)$, and $c(x)$ whose
Taylor coefficients $\{a_i\}$, $\{b_i\}$, and $\{c_i\}$ can be easily computed
given $\{x_i\}$, such that
\begin{align}
    b(x)f'(x) - a(x)f(x) = c(x)
\end{align}
The coefficients $\{f_i\}$ are then computable in $O(k^2)$ time via a formula
involving $\{a_i\}$, $\{b_i\}$, $\{c_i\}$, and $\{x_i\}$. \citet{
griewank2008evaluating} provide a thorough treatment on this subject. We list in
\cref{tab:func-coeff-prop} the result formulas to propagate Taylor coefficients
through the elementwise analytic functions currently used in SANM for mesh
deformation applications:

\begin{table}[h]
    \small
    \centering
    \caption{Expansion coefficient propagation for univariate
        functions~\citep{griewank2008evaluating}}
    \label{tab:func-coeff-prop}
    \begin{tabular}{ll}
        \toprule
        $f(x)$ & Recurrence for $f_k$ \\
        \midrule
        $\ln(x)$ & $\frac{1}{x_0}\qty(
            x_k - \sum_{i=1}^{k-1}\frac{i}{k}x_{k-i}f_i)$ \\[1.5ex]
        $x^r$ & $\frac{1}{x_0}\qty(rf_0x_k +
            \sum_{i=1}^{k-1}(\frac{i}{k}(r+1) - 1)f_{k-i}x_i)$ \\
        \bottomrule
    \end{tabular}
\end{table}

Note that when $f(x)=x^r$ and $r$ is an integer, the recurrence is numerically
unstable when $|x|$ is small. In this case, we compute the Taylor coefficients
via exponentiation by squaring for polynomials in $O(k^2 \log r)$ time.

\subsection{Matrix Inverse}
Let $\vF = \vX^{-1}$ where $\vX\in\real^{m\times m}$ is a square matrix.
Introduce the power series definition and rearrange the terms:
\begin{align}
    \qty(\sum_{i=0}^k \V{F_i}a^i)\qty(\sum_{i=0}^k \V{X_i}a^i) = \V{I}
\end{align}
The coefficient of $a^k$ on the left hand side is $\sum_{i=0}^k \V{F_i}
\V{X_{k-i}}$, which must be zero because the right hand side is a constant:
\begin{align}
    & \sum_{i=0}^k \V{F_i}\V{X_{k-i}} = \vz \nonumber \\
    \label{eqn:taylor-mat-inv}
    \implies & \V{F_k} = -\V{X_0}^{-1}\V{X_k}\V{X_0}^{-1} -
        \qty(\sum_{i=1}^{k-1}\V{F_i}\V{X_{k-i}})\V{X_0}^{-1}
\end{align}
Equation \eqnref{taylor-mat-inv} explicitly defines an affine relationship
between $\V{F_k}$ and $\V{X_k}$.

\subsection{Matrix Determinant}
Let $g(a) = f(\V{X}(a))= \det(\sum_{i=0}^k \V{X_i} a^i)$ where
$\V{X}\in\real^{m\times m}$. A straightforward method to compute $g_k$ is to
expand the determinant according to the Leibniz formula and compute polynomial
products, which incurs exponential complexity $O(m!k^2)$ in terms of the matrix
size. Although this suffices for FEM applications in 2D or 3D (with $m=2$ or
$m=3$), we also present a method with polynomial complexity that is better
suited for larger matrices.

The terms containing $X_{kij}a^k$ that contribute to $g_k$ can only be
multiplied with $X_{0i'j'}$ where $i'\neq i$ and $j' \neq j$. The multiplier of
$X_{kij}a^k$ is in fact $C_{ij}$, where $\V{C}$ is the cofactor of $\V{X_0}$
with $C_{ij}$ defined as the determinant of the remaining matrix by removing the
$\nth{i}$ row and $\nth{j}$ column of $\V{X_0}$. Therefore:
\begin{align}
    g_k = \trans{\vect(\V{C})} \vect(\V{X_k})+ q_k
\end{align}

\paragraph{Computing the slope:} To efficiently compute the cofactor matrix~$
\V{C}$, we use the identity of Cramer's rule:
\begin{align}
    \V{X_0}^{-1} &= \frac{1}{\det(\V{X_0})}\transv{C}
\end{align}
Computing the matrix inverse incurs numerical stability issues. Instead, we
first compute the SVD decomposition
$\V{X_0} = \V{U}\V{\Sigma}\transv{V}$ and then compute $\V{C}$ as:
\begin{align}
    \V{C} &= \det(\V{X_0})\invtransv{X_0} \nonumber \\
    &= \det(\V{U})\cdot\det(\V{\Sigma})\cdot\det(\V{V})\cdot
        \V{U}\V{\Sigma}^{-1}\transv{V} \nonumber \\
    &= \det(\V{U})\cdot\det(\V{V})\cdot
        \V{U}\V{D}\transv{V}
\end{align}
Here $\V{D}=\det(\V{\Sigma})\V{\Sigma}^{-1}$ is a diagonal matrix and $D_{ii} =
\prod_{j\neq i}\Sigma_{jj}$. Such a formulation avoids division of singular
values and is stable even for ill-conditioned matrices.

\paragraph{Computing the bias:}
Similar to the argument in \cref{prop:linear}, we have $q_k =
\coeffof{k}{\det(\sum_{i=0}^{k-1}\V{X_i}a^i)}$. This is known as the polynomial
matrix determinant problem. We propose an efficient solution using discrete
Fourier transform, which has also been discovered by \citet{hromvcik1999new}:
\begin{enumerate}
    \item Compute $\V{Y_i} = \sum_{l=0}^{k-1}\V{X_l}\omega_{K}^{il}$ for $0 \le
        i < K$ with Fast Fourier Transform (FFT), where $K$ is the next power of
        two after $k$, and $\omega_K = e^{-j\frac{1}{2\pi K}}$ is a $\nth{K}$
        root of unity. This step costs $O(k\log k m^2)$.
    \item Compute the determinants $d_i = \det(\V{Y_i})$ for $0 \le i < K$ in
        $O(km^3)$ time.
    \item Use the inverse discrete Fourier transform to compute $q_k =
        \frac{1}{K}\sum_{i=0}^{K-1}d_i \omega_K^{-ik}$ in $O(k)$ time.
\end{enumerate}
The above method computes the bias $q_k$ in $O((\log k + m)km^2)$ time.

\subsection{Singular Value Decomposition}

Singular Value Decomposition (SVD) generates three matrices from a single matrix
input: $\vU\vSig\trans{\vV} = \vX$ where $\vU$ and $\vV$ are orthonormal
matrices and $\vSig$ is a diagonal matrix containing the singular values in
decreasing order. Here we only consider the square case $\vX\in\real^{m\times
m}$. We use $\sigma_i = \Sigma_{ii}$ to represent the singular values.

Although the constitutive models considered in this paper do not directly use
SVD, the ARAP energy, defined as $\Psi_{\text{ARAP}}(\vF) \defeq \frac{\mu}{2}
\norm{\vF - \vR}$, involves a Polar Decomposition (PD) $\vF = \vP\vR$. PD can be
computed from SVD via $\vP = \vU\vSig\trans{\vU}$ and $\vR=\vU\trans{\vV}$. SVD
is also potentially useful for other applications. Therefore, we first present
how to compute SVD in the SANM framework and then discuss extra modifications to
compute PD more stably. The ARAP energy actually needs a \emph{rotation-variant}
PD to prevent reflections in $\vR$ by requiring that $\det(\vR)=1$. We also
discuss how to compute such rotation-variant SVD in SANM.

An obstacle in numerical differentiation of SVD is that when there are identical
singular values $\sigma_i = \sigma_j$, the corresponding singular vectors
$\matcol{\vU}{i}$, $\matcol{\vU}{j}$, $\matcol{\vV}{i}$, and $\matcol{\vV}{j}$
are not uniquely determined. The Jacobians $\pdv{\vU}{\vX}$ and $\pdv{\vV}{\vX}$
in this case are thus undefined since different perturbations on $\vX$ induce
noncontinuous changes in $\vU$ and $\vV$. This case corresponds to a division by
zero in the Jacobian computation, which is often circumvented by various
numerical tricks \citep{ papadopoulo2000estimating, liao2019differentiable,
seeger2017auto}.

In graphics applications, however, identical singular values occur frequently.
For example, the singular values of the deformation gradient matrix in isotropic
stretching are all identical. As a remedy, we propose to use an alternative form
of SVD that includes $\vU\trans{\vV}$ directly, which we denote by SVD-W:
\begin{align}
    \begin{array}{l}
        \vU\vSig\trans{\vU}\vW = \vX \\
        \vW = \vU\trans{\vV}
    \end{array}
\end{align}
Note that $\vW$ is also the rotation matrix in the polar decomposition of $\vX$,
which is unique when $\vX$ is invertible and the Jacobian $\pdv{\vW}{\vX}$ is
thus well-defined.

Now we present the derivation of affine transformations from $\vXk$ to $\vUk$,
$\vSigk$, and $\vWk$. We are not going to give a final equation because it will
be too complex and repeat most of the derivation. Instead, we focus on
explaining the overall procedure for deriving these affine transformations.

We start by expanding the product $\vU\vSig\trans{\vU}\vW$ and extracting the
coefficient of $a^k$, which should be equal to $\vXk$:
\begin{align}
    \label{eqn:svd-xk-e}
    \vXk &= \sum_{\substack{a + b + c + d = k \\ \min(a,b,c,d)\ge 1}}
        \V{U_a}\V{\Sigma_b}\trans{\V{U_c}}\V{W_d}  + \vE \\
    \label{eqn:svd-e-eq}
    \vE &= \vUk\vSigz\trans{\vUz}\vWz
        + \vUz\vSigk\trans{\vUz}\vWz \nonumber \\
        & \hspace{1em} + \vUz\vSigz\trans{\vUk}\vWz
             + \vUz\vSigz\trans{\vUz}\vWk
\end{align}
We define $\vF=\trans{\vUz}\vE\trans{\vWz}\vUz$. Introduce \eqnref{svd-e-eq}
to the right hand side:
\begin{align}
    \label{eqn:svd-f}
    \vF = \trans{\vUz}\vUk\vSigz + \vSigk + \vSigz\trans{\vUk}\vUz +
    \vSigz\trans{\vUz}\vWk\trans{\vWz}\vUz
\end{align}
Note that \eqnref{svd-xk-e} establishes an affine relationship between $\vXk$
and $\vE$, and $\vF=\trans{\vUz}\vE\trans{\vWz}\vUz$ is also linear. We now only
need to seek the affine transformations from $\vF$ to $\vUk$, $\vSigk$ and
$\vWk$.

Expand the orthogonality constraints $\trans{\vU}\vU=\vI$ and
$\trans{\vW}\vW=\vI$:
\begin{align}
    \label{eqn:svd-uktuz}
    \trans{\vUz}\vUk + \trans{\vUk}\vUz + \vBu &= \vz \\
    \label{eqn:svd-wktwz}
    \trans{\vWz}\vWk + \trans{\vWk}\vWz + \vBw &= \vz \\
    \text{where } \vBu = \sum_{i=1}^{k-1} \trans{\V{U_i}}\V{U_{k-i}}
    \text{ and } & \vBw = \sum_{i=1}^{k-1} \trans{\V{W_i}}\V{W_{k-i}} \nonumber
\end{align}

\paragraph{Solving $\vSigk$:} From the constraints \eqnref{svd-uktuz} and
\eqnref{svd-wktwz}, we have
\begin{align}
    \label{eqn:svd-u-diag}
    \diag(\trans{\vUz}\vUk) &= -\frac{1}{2}\diag(\vBu) \\
    \label{eqn:svd-w-diag}
    \diag(\trans{\vUz}\vWk\trans{\vWz}\vUz) &=
        -\frac{1}{2}\diag(\trans{\vUz}\vBw\vUz)
\end{align}
Introducing \eqnref{svd-u-diag} and \eqnref{svd-w-diag} into \eqnref{svd-f}
allows us to solve $\diag(\vSigk)$ from $\diag(\vF)$.

\paragraph{Solving $\vWk$:} Because $\trans{\vUz}\vUk\vSigz + \vSigk +
\vSigz\trans{\vUk}\vUz$ is symmetric, we can cancel this term in \eqnref{svd-f}
by subtracting $\trans{\vF}$ from $\vF$:
\begin{align}
    \label{eqn:svd-f-ft}
    \vF - \trans{\vF} = \vSigz\trans{\vUz}\vWk\trans{\vWz}\vUz -
    \trans{\vUz}\vWz\trans{\vWk}\vUz\vSigz
\end{align}

From \eqnref{svd-wktwz} we have $\trans{\vWk} = -\vBw\trans{\vWz} - \trans{\vWz}
\vWk \trans{\vWz}$. Introduce it to~\eqnref{svd-f-ft}:
\begin{align}
    \label{eqn:svd-f-ft-const}
    \vF - \trans{\vF} - \trans{\vUz}\vWz\vBw\trans{\vWz}\vUz\vSigz &=
        \vSigz\trans{\vUz}\vWk\trans{\vWz}\vUz \nonumber \\
        & \hspace{2em} + \trans{\vUz}\vWk\trans{\vWz}\vUz\vSigz
\end{align}
Note that \eqnref{svd-f-ft-const} is a Sylvester equation in the form $\vSigz\vM
+ \vM\vSigz = \vA$ with $\vM = \trans{\vUz}\vWk\trans{\vWz}\vUz$. The solution
is $M_{ij} = \frac{A_{ij}}{\sigma_i + \sigma_j}$ and $\vWk = \vUz\vM
\trans{\vUz}\vWz$.

\paragraph{Solving $\vUk$:} Introducing $\trans{\vUk} = -\vBu\trans{\vUz} - \trans{\vUz}
\vUk \trans{\vUz}$ (derived from \eqnref{svd-uktuz}) and the solutions of $\vWk$
and $\vSigk$ into \eqnref{svd-f} results in another Sylvester equation: $\vSigz
\trans{\vUz}\vUk - \trans{\vUz}\vUk\vSigz = \V{B}$, with the solution
$(\trans{U_0}U_k)_{ij}=\frac{B_{ij}}{\sigma_i - \sigma_j}$.

\subsubsection{Polar Decomposition Case}
If other operators in the computing graph only need the $\vW$ output matrix of
the SVD-W operator, this computation can be further simplified by considering
the polar decomposition: $\vX=\vP\vW$.

Note that the symmetry of $\vP$ implies the symmetry of $\V{P_i}$. Expand the
identity $\trans{\vP}\vP = \vX\trans{\vX}$ and extract the coefficient of $a^k$:
\begin{align}
    \label{eqn:svd-polar}
    \vPk \vPz + \vPz \vPk + \vBp &=  \vXz\trans{\vXk} + \vXk\trans{\vXz}  \\
    \vBp &= \sum_{i=1}^{k-1} (\V{P_{i}}\V{P_{k-i}} -
        \V{X_{i}}\trans{\V{X_{k-1}}})
\end{align}

\paragraph{Solving $\vPk$:} Introduce $\vPz = \vUz\vSigz\trans{\vUz}$ to
\eqnref{svd-polar} and multiply both sides with $\trans{\vUz}$ and $\vUz$:
\begin{align}
    \trans{\vUz}\vPk\vUz\vSigz + \vSigz\trans{\vUz}\vPk\vUz =
    \trans{\vUz}(\vXz\trans{\vXk} + \vXk\trans{\vXz} - \vBp)\vUz
\end{align}
We have obtained another Sylvester equation and $\vPk$ can be solved similarly
to solving $\vWk$ from \eqnref{svd-f-ft-const}.

\paragraph{Solving $\vWk$:} We can derive $\vWk$ by directly expanding
$\vP\vW=\vX$:
\begin{align}
    \vWk &= \vUz\vSigz^{-1}\trans{\vUz}\qty(
        \vXk - \sum_{i=1}^k\V{P_i}\V{W_{k-i}}
    )
\end{align}

\subsubsection{Implementation Notes}

Although the SVD-W formulation provides a numerically stable expression to
compute the Jacobian $\pdv{\vW}{\vX}$, the biases~$\vWk$ still depend on the
numerically unstable $\V{U_i}$ via \eqnref{svd-xk-e}. The polar decomposition
formulation does not suffer from this problem because $\vP=\vU\vSig\trans{\vU}$
is unique in the presence of equal singular values as long as $\vX$ is
invertible. In the implementation, we transparently switch to the polar
decomposition formulation to compute $\vWk$ in the SVD-W operator when the
outputs $\vU$ and $\vSig$ are not needed by other operators in the computing
graph. We adopt the Lorentzian broadening \citep{liao2019differentiable} $
\nicefrac{x}{y} \rightarrow \nicefrac{xy}{(y^2+\epsilon)}$ with $\epsilon =
10^{-12}$ when computing the divisions in solving the Sylvester equations.

The rotation-variant SVD requires $\det(\vW)=1$ so that $\vW$ is a proper
rotation matrix. It is traditionally obtained by negating the last singular
value and the corresponding left-singular or right-singular vector if $
\det(\vW)=-1$ \citep{ kim2020dynamic}. However, when the last singular value is
identical to another singular value, the Jacobian $\pdv{\vW}{\vX}$ becomes
undefined because there are multiple singular vectors for this singular value,
and it is arbitrary to negate one of them. From a numerical perspective, we need
to compute $ \nicefrac{1}{(\sigma_i + \sigma_j)}$ in the Jacobian with some
$\sigma_i+ \sigma_j=0$. To improve numerical stability, we modify the
rotation-variant SVD computation by grouping identical singular values together
and preferring to negate all singular values and vectors in a group of an odd
size. In the 3D case, there is an odd number of singular values, and therefore
an odd-sized group must exist.



\section{The SANM System}

\begin{listing*}[t]
    \inputminted[autogobble,fontsize=\footnotesize]{cpp}{res/pk1.cpp}
    \caption{A code excerpt for building the first Piola–Kirchhoff stress tensors
        with three constitutive models using the SANM framework. The symbolic
        nature of SANM allows easy substitution of different constitutive
        models. This function is essentially a literal translation of the
        formulas \eqnref{pk1-nc}-\eqnref{pk1-arap}.}
    \label{code:build}
\end{listing*}

We use C++ to implement SANM. This section discusses a few design choices that
support efficient mesh deformation applications in SANM.

\subsection{API Design}
\label{sec:API}

SANM adopts a define-and-run paradigm. The user describes a nonlinear system
symbolically and provides initial values and input/output transformations. SANM
automatically solves the system using the extended ANM framework described in
previous sections.

One of the most important public APIs of SANM is for building the symbolic
computing graph to represent the nonlinear system of interest. We adopt an
object-oriented design to enable intuitive and efficient computing graph
building. We use objects in the program to represent variable vertices in the
computing graph. We also provide overloading for common arithmetic operators.
\cref{code:build} shows a SANM code excerpt of expressing the first
Piola-Kirchhoff tensors for a few constitutive models. SANM only requires the
user to provide the C++ object that represents the whole nonlinear system, the
sparse affine transformations on the inputs and outputs (see \cref{sec:batch}),
and the initial values. The user does not need to modify the solver to work on
different tasks.

SANM significantly reduces programming effort for applying ANM. We roughly
measure programming effort by the number of lines of C++ code.  With SANM, the
whole FEM solver for all elastic deformation applications in this paper,
including auxiliary functionalities such as mesh input/output and tetrahedron
processing, needs only 1.5K lines of code without using external geometry
manipulation libraries. The SANM library itself has about 7.5K lines of code. By
contrast, the official ANM implementation of \citet{ chen2014asymptotic} has
11,499 lines of code for only the ANM numerical solving part
(\verb|neoHookeanANM.cpp| and \verb|neoHookeanANMForward.cpp|). Their code only
supports the incompressible neo-Hookean model, while SANM allows working with
multiple constitutive models by changing a few lines of code.

Internally, SANM provides a mechanism to register new operators. Each operator
only needs to implement a few functions, such as gradient computing and Taylor
coefficient propagation. The SANM framework manipulates the computing graph and
orchestrates individual operator functionalities to implement ANM solving. SANM
currently has operators to support the constitutive models used in this paper,
including arithmetic operators (addition, subtraction, multiplication, division,
logarithm, and exponentiation), tensor operators (slicing and concatenation),
and batched linear algebra operators (matrix multiplication, inverse,
determinant, transpose, and SVD). It is easy to extend SANM to support more
operators via the operator registration mechanism.

\subsection{Batch Computing}
\label{sec:batch}

In finite element analysis, we typically carry out computation of some identical
form for a set of elements. In the context of 3D mesh deformation, we compute
the stress tensors of each tetrahedron from its deformation gradient. This form
of computation allows us to exploit modern computer architectures better via
batch processing. Specifically, we pack the matrices of all elements into a
large tensor on which the operator in the computing graph executes. For example,
the deformation gradient of the $\nth{i}$ element is computed as $\vFi =
\operatorname{matmul}\qty(\vD_{si},\, \vD_{mi}^{-1})$. If we execute the
coefficient propagation algorithm discussed in \cref{sec:coeff-prop} for each
tetrahedral element independently, we will need to invoke the \verb|matmul|
operators $n$ times, while each invocation only works on $3\times 3$ small
matrices. Instead, we pack all the deformation gradients into a third-order
$n\times3\times3$ tensor $\mathbb{F} = [\V{F_1};\, \cdots;\,\V{F_n}]$, and pack
$\mathbb{D}_m$ and $\mathbb{D}_s$ similarly. The computing graph then contains a
single \verb|batched_matmul| operator that computes matrix multiplication for
all elements together: $\mathbb{F} = \operatorname{batched\_matmul}
\qty(\mathbb{D}_s,\, \mathbb{D}_m^{-1})$. In our mesh deformation applications,
the unknown vector $\vx$ represents node coordinates. We use a sparse affine
transformation $\vA$ to map the coordinates to shape matrices and another
transformation $\vB$ to map from stress tensors to nodal force: $\vf(\vx) =
\vB(\vP(\vF(\vA(\vx))))$ where $\vP$ represents the Piola–Kirchhoff stress
tensors, $\vF$ represents the deformation gradients, and $\vP(\vF(\cdot))$ is
computed in a batched manner. The sparse affine transformations $\vA$ and $\vB$
are provided as input/output transformations to the SANM solver.

Batch computing improves performance by better utilizing the hardware, although
it does not reduce computational complexity. Most modern CPUs support Single
Instruction Multiple Data (SIMD) parallelism, and GPUs are designed to process
large amounts of data in parallel. Without batch computing, such hardware
capability can hardly be utilized by the small matrices occurring in finite
element analysis. Batch computing also amortizes the overhead of computing graph
traversing in \cref{algo:comp-coeff}.

SANM supports parallel computing by splitting data on the batch dimension, which
is managed by the framework and is oblivious to individual operator
implementations. \cref{fig:scalability} compares solving times achieved with
different numbers of threads, which shows that SANM exhibits modest scalability
on practical workloads.

\begin{figure}[ht]
    \centering
    \includegraphics[width=0.9\columnwidth]{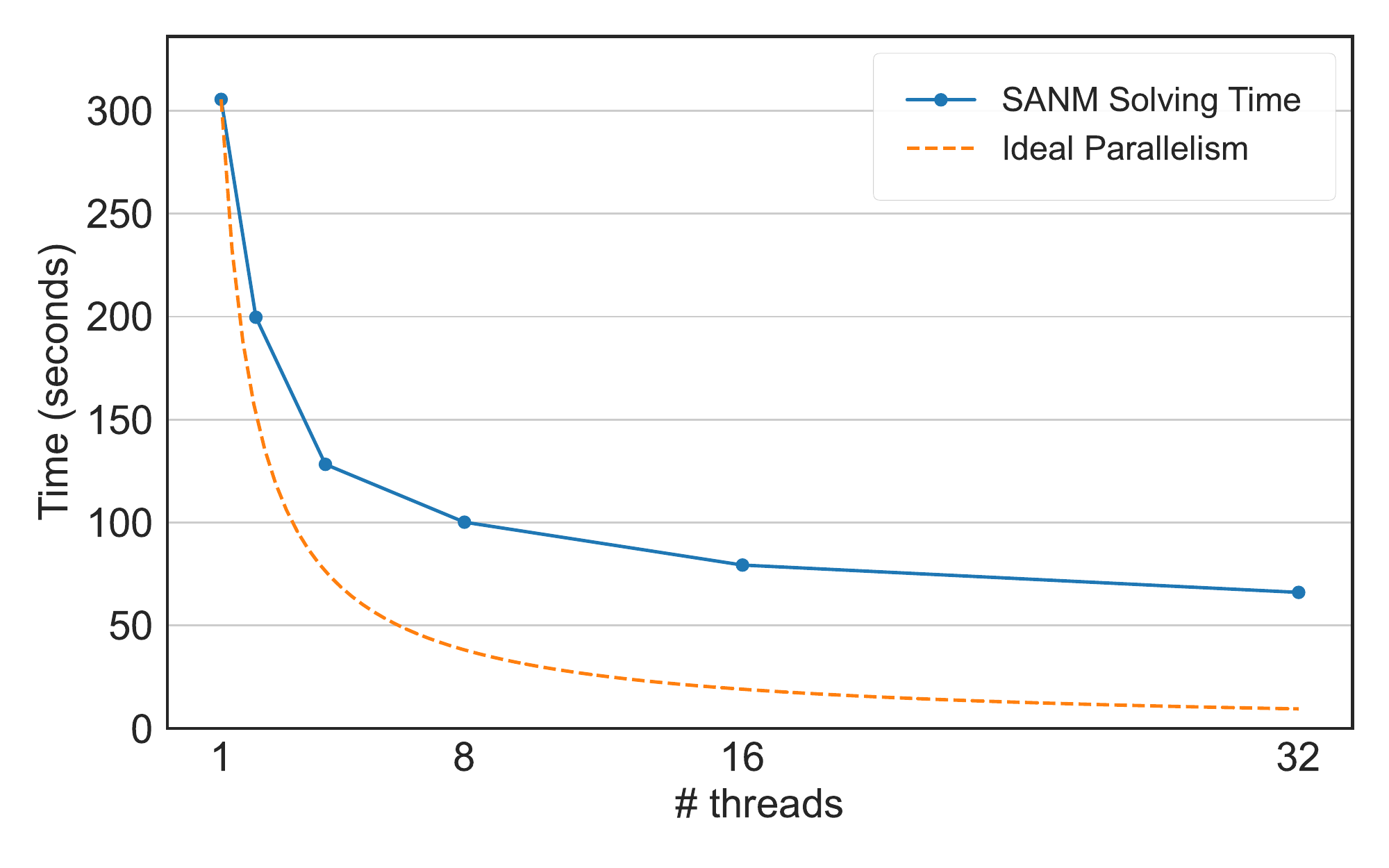}
    \caption{Comparing the end-to-end solving times with different numbers of
        threads on the static forward gravity equilibrium problem of the
        Armadillo model with 221,414 nodes and 696,975 tetrahedrons. SANM
        exhibits modest scalability using up to 32 threads. The ideal
        parallelism is the single-thread solving time divided by the number of
        threads.
    }
    \label{fig:scalability}
\end{figure}

\subsection{Performance Optimizations}

We design SANM with high-performance computing in mind. Here we discuss other
optimizations besides batch computing.

\paragraph{Efficient numerical computing primitives:} SANM provides an
abstraction of numerical computing primitives. The implementations of computing
graph operators invoke these primitives instead of directly working on numerical
data.  This paradigm allows us to separate numerical algorithm description from
performance engineering. Currently, we use Eigen~\citep{eigenweb} and Intel Math
Kernel Library to implement the computing primitives on CPU with Single
Instruction Multiple Data (SIMD) optimizations. We can easily extend SANM to
support GPU by adding another GPU backend for the primitives without modifying
implementations of operators or the ANM solver. This abstraction also allows
SANM to benefit from other research on optimizing tensor computing performance,
such as recent related research in deep learning \citep{ chen2018tvm,
jouppi2017in}.

\paragraph{Automatic memory management with copy-on-write:} SANM automatically
manages tensor memory by reference counting, with eager memory sharing and
copy-on-write to simplify programming without sacrificing performance. When a
tensor object is copied, only a new reference is stored in the destination. When
a tensor object is modified, SANM makes a private copy before modification if
the reference count is greater than one.

\paragraph{Sparse affine transformations:} We exploit the structural sparsity
when computing the Jacobians and the affine transformations in
\cref{algo:comp-coeff}. For a batch-packed tensor with dimensions $n\times
m\times m$, we use an $n\times m^2\times m^2$ tensor to represent its Jacobian
instead of using a full $nm^2 \times nm^2$ matrix because each matrix in the
batch is independent of each other. Our sparse Jacobian representation
significantly reduces memory usage when $n$ is large. It is also friendly to
batch computing.  Furthermore, we use a compact $n \times m^2$ matrix to
represent Jacobians for elementwise operators.

\paragraph{Special handling of zero tensors:} Zero tensors frequently occur,
such as being used as the initial accumulation value. SANM retains a special
buffer shared by all zero-initialized tensors (also with reference counting and
copy-on-write). Thus checking whether a tensor is all zero can be easily done by
comparing the buffer address. This design allows implementing a fast path for
handling zero inputs in elementary arithmetic operators, such as $x+0=x$ and
$x\cdot0=0$. This optimization leads to a 7.91\% speedup in our experiment.

\subsection{Future Performance Optimizations}

We discuss other optimizations that are not yet implemented but likely to be
helpful. Thanks to the define-and-run paradigm, SANM users can benefit from
future optimizations by simply updating their SANM library without modifying
their application code.

\paragraph{Computing graph optimization:} Since the user symbolically defines
the computing graph, SANM can optimize the graph before starting numerical
computation. For example, we can simplify arithmetic expressions. We can also
fuse arithmetic operators with just-in-time compilation. There is a large body
of research on traditional compiler optimization \citep{ lattner2004llvm} and
recent tensor compiler optimization in deep learning \citep{lattner2020mlir}
that may benefit future SANM optimizations.

\paragraph{Reducing memory usage:} Currently, we store all the intermediate
Taylor coefficients in memory, which incurs some memory overhead. A possible
improvement is setting up checkpoints on the computing graph and recomputing the
Taylor coefficients between checkpoints each time. A good choice of checkpoints
might induce little computational cost ($O(1)$ times the original cost) while
saving lots of memory ($O(\nicefrac{1}{\sqrt{N}})$ relative memory usage) for a
computing graph with a chain of length $N$ \citep{chen2016training}.


\section{Mesh Deformation Applications}
\label{sec:application}

We evaluate SANM on a few volumetric mesh deformation problems. We first briefly
review the basics of elastic deformation analysis. Readers may refer to
\citet{bonet_wood_2008, sifakis2012fem, kim2020dynamic} for a more thorough
treatment on this subject.

We consider 3D deformation of hyperelastic materials, for which the work done by
the stresses during a deformation process only depends on the initial and final
state. A constitutive model relates the elastic potential energy density $\Psi$
to the deformation gradient $\vF$. Under a piece-wise linear tetrahedral
discretization, the deformation gradient is constant within a tetrahedron: $\vF
= \vD_s\vD_m^{-1} \in \real^{3\times3}$ where $\vD_s$ is the \emph{deformed
shape matrix} (computed from the deformed tetrahedron) and $\vD_m$ is the
\emph{reference shape matrix} (computed from the rest tetrahedron). A shape
matrix of a tetrahedron packs the three column vectors from one vertex to the
other three. The elastic force $\V{f_{ij}} \in \real^3$ exerted by a single
tetrahedron $i$ to its $\nth{j}$ node is then derived by taking the gradient of
the potential energy with respect to the node coordinates: $\V{f_{ij}} =
-\pdv{\Psi(\V{F_i})} {\V{x_{ij}}}$. We can obtain the formulation for the
internal elastic force at a node $i$ by combining forces exerted by neighboring
tetrahedrons, which equals the following:
\begin{align}
    \vfi &=  \sum_{t\in N_i} \vP(\V{F_t}) \bar{\V{n}}_{t,i}
\end{align}
where $N_i$ is the set of adjacent tetrahedrons containing node $\vxi$,
$\vP(\vF) \defeq \pdv{\Psi(\vF)}{\vF}$ is the first Piola–Kirchhoff stress
tensor of the constitutive model, and $\bar{\V{n}}_{t,i}$ is the outward
area-weighted normal vector at node $\vxi$ of the tetrahedron $t$ in the
undeformed state.

To solve the inverse deformation problem that seeks a rest shape which deforms
to a given shape, we introduce the Cauchy stress tensor $\vsig$ that linearly
relates the elastic force to the deformed state:
\begin{align}
    \vfi &=  \sum_{t\in N_i} \vsig(\V{F_t}) \V{n}_{t,i} \\
    \vsig(\vF) &= \frac{1}{\det(\vF)}\vP(\vF)\trans{\vF}
\end{align}
where $\V{n}_{t,i}$ is the outward area-weighted normal vector at node $\vxi$ of
the tetrahedron $t$ in the given deformed state.

This paper considers three constitutive models: the compressible neo-Hookean
energy (denoted by NC), the incompressible neo-Hookean energy (denoted by NI),
and the As-Rigid-As-Possible energy (denoted by ARAP). We list their first
Piola–Kirchhoff stress tensors:
\begin{align}
    \label{eqn:pk1-nc}
    \vP_{NC}(\vF) &= \mu\qty(\vF - \invtrans{\vF}) +
        \lambda \log(J)\invtrans{\vF} \\
    \label{eqn:pk1-ni}
    \vP_{NI}(\vF) &= \mu J^{-\frac{2}{3}}\qty(\vF - \frac{1}{3} \norm{\vF}
        \invtrans{\vF}) + \kappa J (J-1) \invtrans{\vF} \\
    \label{eqn:pk1-arap}
    \vP_{ARAP}(\vF) &= \mu\qty(\vF - \vR) \\
    J &\defeq \det(\vF) \nonumber
\end{align}
Note that ARAP \eqnref{pk1-arap} needs a rotation-variant polar decomposition
$\vF = \vR\V{S}$ such that $\vR$ is a proper rotation matrix with $\det(\vR)=1$.
The parameters, $\lambda$ (Lam{\'e}'s first parameter), $\mu$ (Lam{\'e}'s second
parameter), and $\kappa$ (the bulk modulus) are all determined by the physical
properties of the material.

\subsection{Forward and Inverse Static Equilibrium Problems}

\begin{table}[t]
    \centering
    \caption{Comparing with the hand-coded, specialized ANM solver of \citet{
        chen2014asymptotic} on gravity equilibrium problems. In this table, inv.
        and fwd. mean inverse and forward problems respectively, and mt4 or mt6
        indicate using 4 or 6 threads for parallel computing. Since the code of
        \citet{ chen2014asymptotic} no longer compiles on modern systems, we
        directly use the data reported in their paper. We use the models
        included in their open-source release and exclude models with additional
        external force because their force description file is in a custom
        binary format. We ran SANM on a server with two Intel Xeon Platinum
        8269CY CPUs (2.5GHz - 3.8GHz), while \citet{ chen2014asymptotic} used a
        desktop PC with an Intel i7-3770K CPU (3.5GHz - 3.9GHz). We use the same
        truncation order $N=20$ as \citet{ chen2014asymptotic}. The data show
        that SANM delivers comparable, if not better, performance as a
        hand-coded and manually optimized ANM solver.  \label{tab:expr-cmp-chen}
    }
    \begin{tabular}{lrrrrr}
    \toprule
    \multirow{2}{*}{Model} & \multicolumn{3}{c}{Ours: SANM} & \multicolumn{2}{c}{\citet{chen2014asymptotic}} \\
    \cmidrule(lr){2-4}
    \cmidrule(lr){5-6}
        & \#Iter. & Time & Time (mt4) & \#Iter. & Time (mt6) \\
    \midrule
    \expandableinput img/chen-cmp.tex
    \bottomrule
    \end{tabular}
\end{table}

Given a static external force, we consider the problems of solving the rest
shape given the deformed shape (the inverse problem) and solving the deformed
shape given the rest shape (the forward problem) similar to \citet{
chen2014asymptotic}. Formally, let $\bar{\vx}$ denote the rest shape, $\vx$ the
deformed shape, $\vf(\bar{\vx},\,\vx)$ the internal elastic force, and
$\vf_{ext}$ the external force. We solve $\vf([\bar{\vx};\;\vx_b],\,
[\vx;\;\vx_b])+\vf_{ext} = \vz$ given either $\bar{\vx}$ or $\vx$, where $\vx_b$
contains fixed boundary nodes. This problem naturally fits into the numerical
continuation framework by replacing $\vf_{ext}$ with $\lambda \vf_{ext}$. We
consider static equilibrium under gravity and set $\vf_{ext}$ as the per-node
gravity.

\citet{chen2014asymptotic} has shown that ANM is tens to thousands of times
faster than the Levenberg-Marquardt algorithm on the inverse problem, and it is
also roughly ten times faster than an implicit Newmark simulation with kinetic
damping \citep{umetani2011sensitive} on the forward problem.
\cref{tab:expr-cmp-chen} compares SANM with the hand-coded and manually
optimized ANM solver of \citet{chen2014asymptotic}, which shows that SANM
achieves comparable or better performance while automatically solves the
problem. We also evaluate SANM on a large Armadillo model with 221,414 nodes and
696,975 tetrahedrons, which is nearly ten times larger than the models used in
\citet{chen2014asymptotic}. \cref{fig:scalability} presents the solving time of
the forward problem using different numbers of threads. \cref{fig:armadillo-g}
shows the intermediate states for a few values of $\lambda$ in the continuation.

\subsubsection{Comparison with Newton's methods}

\label{sec:application:cmp-newton-fwd}

\begin{table*}[t]
\centering
\small
\begin{threeparttable}
    \caption{
        Performance comparison on forward gravity equilibrium problems. We limit
        the Levenberg-Marquardt algorithm to use no more than 1000 iterations.
        The SANM speedup is computed by comparing with the fastest correct
        (i.e., producing no inverted tetrahedrons) alternative method for each
        problem.  The NC and NI materials refer to compressible and
        incompressible neo-Hookean materials respectively. $\rms(\vf)$ refers to
        the root-mean-square value of force residuals on unconstrained nodes.
        The bracketed numbers indicate the Gauss-Newton refinement iterations
        (limited to be 20). Proj. Newton uses per-tetrahedron Hessian projection
        derived from \citet{smith2019analytic}. SANM achieves the low residual
        without extra refinement thanks to the techniques presented in
        \cref{sec:continue-with-eqn}. Bold times mark the fastest methods and
        italic times mark the second fastest methods. SANM achieves an average
        speedup of \gmeanSpeedupGravity~by geometric mean.
        \label{tab:expr-gravity}
    }
    \begin{tabular}{llrrrrrrrrrrrrr}
        \toprule
        \multirow{2}{*}{Mesh} &
        \multirow{2}{*}{Material} &
        \multicolumn{3}{c}{Ours: SANM} &
        \multicolumn{3}{c}{Newton} &
        \multicolumn{3}{c}{Proj. Newton} &
        \multicolumn{3}{c}{Levenberg-Marquardt} &
        \multirow{2}{4em}{SANM Speedup} \\
        \cmidrule(lr){3-5}
        \cmidrule(lr){6-8}
        \cmidrule(lr){9-11}
        \cmidrule(lr){12-14}
        & &
        \#Iter & Time & $\rms(\vf)$ &
        \#Iter & Time & $\rms(\vf)$ &
        \#Iter & Time & $\rms(\vf)$ &
        \#Iter & Time & $\rms(\vf)$ & \\
        \midrule
        \expandableinput img/gravity.tex
        \bottomrule
    \end{tabular}
    \begin{tablenotes}
        \item[*] The solution contains inverted tetrahedrons.
    \end{tablenotes}
\end{threeparttable}
\end{table*}

We compare with more methods on the forward problem. The minimum total potential
energy principle dictates that the equilibrium state is the minimizer of the
total potential energy, including the elastic potential energy and the
gravitational potential energy. Therefore, an alternative method for the forward
problem is to solve $\argmin_{\vx} \qty(\Psi(\vx) - \transv{g} \vx)$ where
$\V{g}$ is the per-node gravity. We implement Newton's method with backtracking
line search to solve the minimization. We also evaluate positive-semidefinite
Hessian projection with a state-of-the-art derivation of per-tetrahedron
analytic eigensystems for the elastic energy functions \citep{
smith2019analytic}. Note that the energy minimization method does not apply to
the inverse problem due to the lack of corresponding global energy. We also
compare with directly minimizing $\norm{\vf(\vx)+\V{g}}$ by the
Levenberg-Marquardt algorithm.

We run the experiments on a desktop PC with an AMD Ryzen Threadripper 2970WX
CPU. All the implementations are compiled with the same compiler, use the same
linear algebra libraries, and use a single thread. Newton's method uses an LU
solver, and the projective Newton uses a faster LLT solver due to the guaranteed
positive definiteness of projected Hessians. We use techniques described in
\cref{sec:continue-with-eqn} to reduce the error of SANM. We set the truncation
order $N=20$. We set the convergence threshold to be $\epsilon=10^{-10}$ for the
RMS of the force residual. We find that energy minimization with Newtonian
methods often fails to converge to such a small force residual due to vanishing
step sizes near the optimum, and therefore we stop them if either the RMS of the
force residual or the change of $\vx$ in one iteration drops below $10^{-6}$. We
then use additional Gauss-Newton iterations to fine-tune the solution.
\cref{tab:expr-gravity} presents the comparisons, which shows that SANM
converges faster than the considered alternative methods in most cases and
achieves low residual without additional refinement.


\subsection{Controlled Mesh Deformation}
\label{sec:controlled-deform}

\begin{table*}[t]
\centering
\begingroup
\small
\setlength{\tabcolsep}{3pt}
\begin{threeparttable}
    \caption{
        Performance comparison on controlled mesh deformation problems. This
        table uses similar notations as \cref{tab:expr-gravity}. Note that
        Newton's minimization methods do not work with neo-Hookean energies
        because the initial guess contains inverted tetrahedrons. SANM uses
        equation solving presented in \cref{sec:continue-with-eqn} to refine the
        solution, while Newton's methods use Gauss-Newton iterations for
        refinement. SANM achieves an average speedup of \gmeanSpeedupDeform~by
        geometric mean.
        \label{tab:expr-deform}
    }
    \begin{tabular}{lrrrrrrrrrr@{\hspace{1.5em}}rrrrrr}
        \toprule
        \multirow{2}{*}{Mesh} &
        \multicolumn{3}{c}{Ours: SANM (ARAP)} &
        \multicolumn{3}{c}{Newton (ARAP)} &
        \multicolumn{3}{c}{Proj. Newton (ARAP)} &
        \multirow{2}{3em}{SANM Speedup} &
        \multicolumn{3}{c}{SANM (NC)} &
        \multicolumn{3}{c}{SANM (NI)} \\
        \cmidrule(lr){2-4}
        \cmidrule(lr){5-7}
        \cmidrule(lr){8-10}
        \cmidrule(lr){12-14}
        \cmidrule(lr){15-17}
        &
        \#Iter & Time & $\rms(\vf)$ &
        \#Iter & Time & $\rms(\vf)$ &
        \#Iter & Time & $\rms(\vf)$ &
        &
        \#Iter & Time & $\rms(\vf)$ &
        \#Iter & Time & $\rms(\vf)$ \\
        \midrule
        \expandableinput img/deform.tex
        \bottomrule
    \end{tabular}
    \begin{tablenotes}
        \item[*] The solution contains inverted tetrahedrons.
    \end{tablenotes}
\end{threeparttable}
\endgroup
\end{table*}

We demonstrate implicit homotopy solving on controlled mesh deformation
problems. The problem asks for the equilibrium state when specific nodes are
moved to given locations. The constrained nodes, called the control handles, are
usually specified by a user so that the elastic body can be deformed into the
desired pose. This problem is typically solved with an energy minimization
framework that minimizes the total elastic potential energy.

We propose an alternative approach with implicit homotopy. Given the rest shape
of the body as $\vx_0$, the initial position of control handles as $\vx_c$, and
the target position of control handles as $\vx_t$, we define an implicit
homotopy for the unconstrained nodes $\vx$:
\begin{align}
    \vH(\vx,\,\lambda) &=  \vf\qty(
        \qty[
            \begin{array}{c}
                \vx_0 \\
                \vx_c
            \end{array}
        ],\,
        \qty[
            \begin{array}{c}
                \vx \\
                \vx_c + \lambda(\vx_t - \vx_c)
            \end{array}
        ]) = \vz \\
    \text{with } \vH(\vx_0,\,0) &= \vz \nonumber
\end{align}
where $\vf(\bar{\vx},\,\vx)$ computes the internal elastic force given the rest
shape $\bar{\vx}$ and the deformed shape $\vx$. Numerical continuation of
$\lambda$ from $0$ to $1$ solves the coordinates of unconstrained nodes, and
each intermediate configuration is a valid equilibrium state with the control
handles on a linear path from $\vx_c$ to $\vx_t$. Computing an intermediate
state at $\lambda_i$ only requires solving an equation $\lambda(a)=\lambda_i$
and then evaluate $\vx(a)$, which incurs negligible extra cost. After
solving the homotopy, we refine the solution by solving a static equilibrium
problem with zero external force using the improved equation solving presented
in \cref{sec:continue-with-eqn} (with truncation order $N=6$) to reduce the
force residual.

We compare SANM with Newton's methods on a few 3D models with manually specified
target positions of control handles, such as the deformed Bob shown in
\cref{fig:summary:c}. Experimental settings are similar to those in
\cref{sec:application:cmp-newton-fwd}. Newton's methods directly minimize
$\Psi([\vx;\; \vx_t])$ starting at $\Psi([\vx_0;\;\vx_t])$. We do not evaluate
the Levenberg-Marquardt algorithm since it is too inefficient compared to
others.  Note that the initial guess $\vx=\vx_0$ for Newton's methods induces
inverted tetrahedrons. Unfortunately, the neo-Hookean energies do not handle
this case (corresponding to $J<0$ in \eqnref{pk1-nc} and \eqnref{pk1-ni}), and
therefore we do not evaluate Newton's methods on them.  Furthermore, a
straightforward Newton's method without Hessian projection sometimes fails on
the ARAP energy because the inverted tetrahedrons cause indefinite Hessian with
more negative eigenvalues that obstruct the optimization progress. By contrast,
numerical continuation in SANM ensures a smooth process where no tetrahedron
gets inverted, and therefore it also works for neo-Hookean energies.
\cref{tab:expr-deform} presents the results, which shows that SANM is more
robust and more efficient in most cases.

We demonstrate the robustness of SANM on a problem with larger deformation. We
twist a horizontal bar by 360 degrees and then bend it as shown in
\cref{fig:cube-twist}. Note that the target boundary configuration is
indistinguishable from another one which has only the bending but no twisting.
SANM naturally handles this case by using a piecewise-linear description of the
movement path. However, to apply energy minimization methods, a similar but
arguably less principled continuation scheme (such as minimizing the energy in
multiple stages) or more complicated initialization strategies are needed to
resolve the rotation ambiguity.

\begin{figure}[ht]
    \centering
    \includegraphics[width=.3\columnwidth]{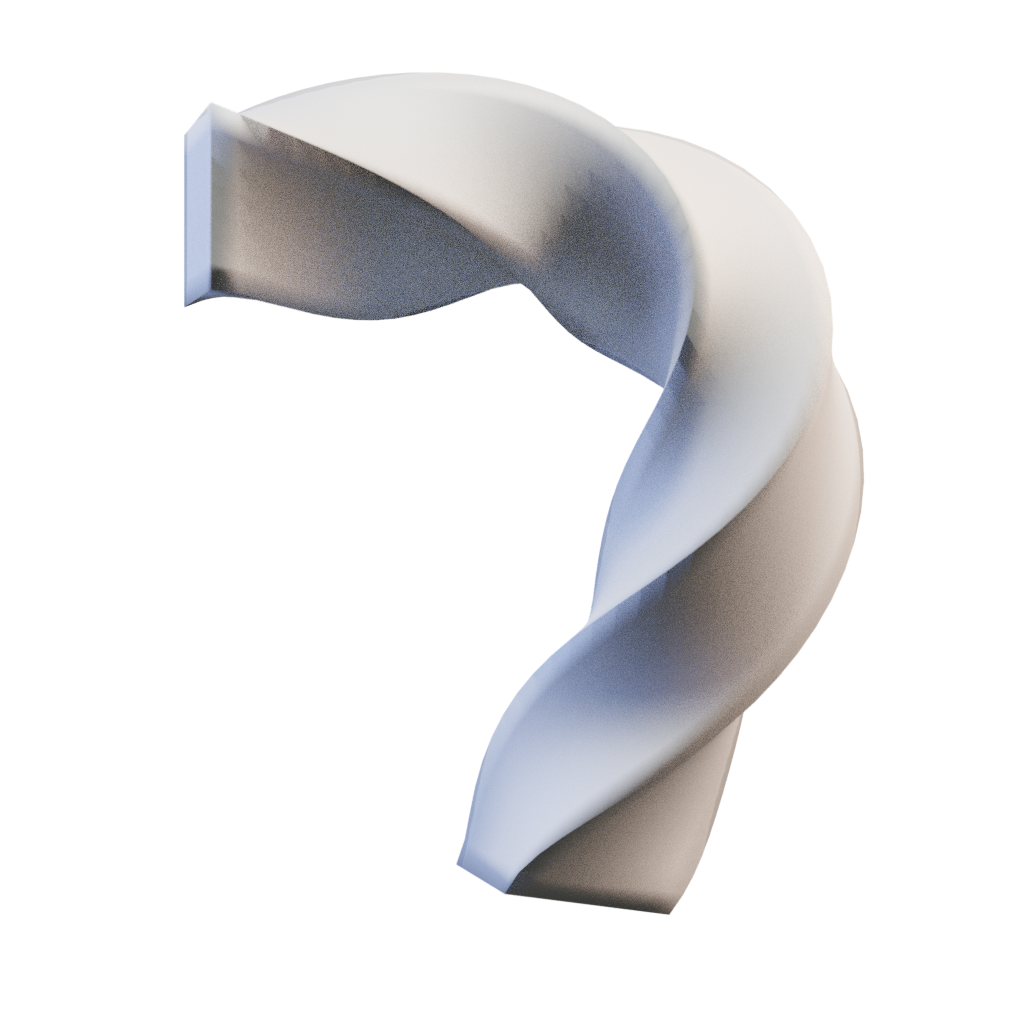}
    \hfill
    \includegraphics[width=.3\columnwidth]{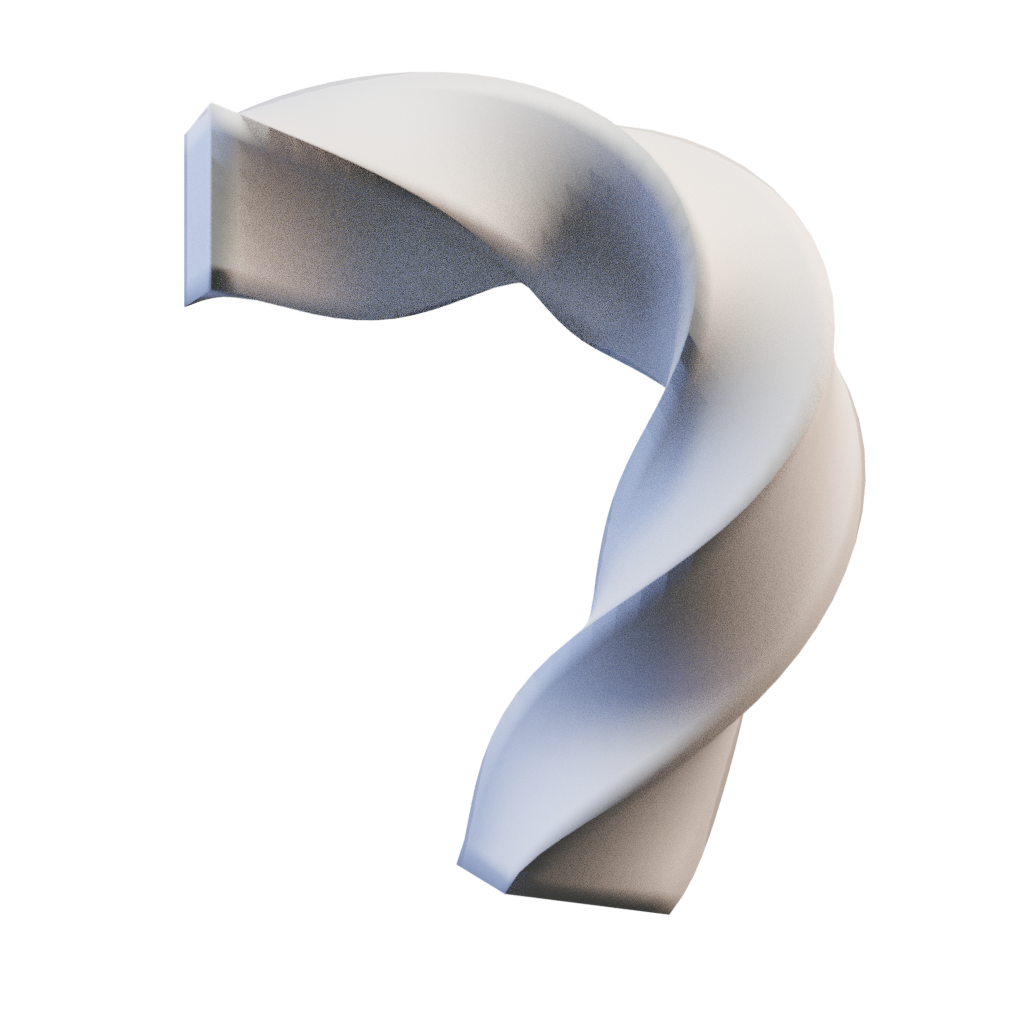}
    \hfill
    \includegraphics[width=.3\columnwidth]{img/cube-twist-arap.png}
    \caption{Twisting a horizontal bar by 360 degrees and then bending it, with
        three constitutive models from left to right: compressible neo-Hookean,
        incompressible neo-Hookean, and ARAP.
        \label{fig:cube-twist}
    }
\end{figure}


\subsection{Discussion}
\label{sec:meth-discuss}

Our experiments show that SANM delivers shorter solving times than Newton's
methods.  On the \speedupNrCases~comparison experiments (including both forward
equilibrium problems and controlled deformation problems), SANM achieves an
average speedup of \gmeanSpeedup~by geometric mean compared to the fastest
alternative method for each case. On average, SANM spends
\sparseSolverTimeUsed~of its running time in the sparse linear solver, while
most of the other time is used by Taylor coefficient computation that can be
further improved.

SANM and Newton's methods target different problems, and they can not completely
replace each other. SANM solves nonlinear systems via numerical continuation,
while Newton's methods typically solve minimization problems. Moreover,
numerical continuation allows easily computing intermediate equilibrium states
almost for free. By comparison, the intermediate states of a Newtonian solver
are less interpretable, but such solvers might reduce the energy in early
iterations and thus quickly produce visually plausible results.

Energy minimization does not apply to all of our experiment problems. We are
unaware of any global energy suitable for the inverse static equilibrium
problem. For controlled deformation problems, finding a proper initial guess for
energy minimization becomes nontrivial for certain energies that can not handle
inverted elements. We also need to take special care when applying energy
minimization to target configurations that involve ambiguity, such as rotations.
By contrast, SANM directly and efficiently handles these cases with numerical
continuation.



\section{Conclusion}

The asymptotic numerical method is a powerful numerical continuation method for
solving nonlinear systems. Prior to our work, a major obstacle of applying ANM
was the difficulty in deriving the Taylor coefficients. We have shown that this
process can be fully automated and generalized to handle a large family of
nonlinearities. We also extend the ANM formulation to handle implicit homotopy.
Moreover, we implement an efficient and automatic ANM solver, SANM, that
delivers comparable or better performance than a hand-coded, manually optimized,
and specialized ANM solver. Although energy minimization targets different
problems from SANM in general, we also compare SANM with energy minimization via
Newton's methods on a few problems and show that SANM performs favorably.

With our tool, one can explore ANM on many applications in various fields with
little effort. SANM can contribute to improvements in numerical solving in many
systems. It may also inspire deeper theoretical understanding and further
improvement of ANM.

\begin{acks}
    We thank all the anonymous reviewers for providing the detailed review
    feedback, which has greatly improved the quality of this work. We would also
    like to thank Changxi Zheng, Wojciech Matusik, Liang Shi, and Martin Rinard
    for the helpful technical discussions and Lingxiao Li for proofreading.
    This work was funded by the project ``Automatically Learning the Behavior of
    Computational Agents'' (MIT CO 6940111, sponsored by Boeing with sponsor ID
    \#Z0918-5060).
\end{acks}




\bibliographystyle{ACM-Reference-Format}
\bibliography{references}


\begin{thebibliography}{55}


\ifx \showCODEN    \undefined \def \showCODEN     #1{\unskip}     \fi
\ifx \showDOI      \undefined \def \showDOI       #1{#1}\fi
\ifx \showISBNx    \undefined \def \showISBNx     #1{\unskip}     \fi
\ifx \showISBNxiii \undefined \def \showISBNxiii  #1{\unskip}     \fi
\ifx \showISSN     \undefined \def \showISSN      #1{\unskip}     \fi
\ifx \showLCCN     \undefined \def \showLCCN      #1{\unskip}     \fi
\ifx \shownote     \undefined \def \shownote      #1{#1}          \fi
\ifx \showarticletitle \undefined \def \showarticletitle #1{#1}   \fi
\ifx \showURL      \undefined \def \showURL       {\relax}        \fi
\providecommand\bibfield[2]{#2}
\providecommand\bibinfo[2]{#2}
\providecommand\natexlab[1]{#1}
\providecommand\showeprint[2][]{arXiv:#2}

\bibitem[\protect\citeauthoryear{Abadi, Barham, Chen, Chen, Davis, Dean, Devin,
  Ghemawat, Irving, Isard, et~al\mbox{.}}{Abadi et~al\mbox{.}}{2016}]%
        {abadi2016tensorflow}
\bibfield{author}{\bibinfo{person}{Mart{\'\i}n Abadi}, \bibinfo{person}{Paul
  Barham}, \bibinfo{person}{Jianmin Chen}, \bibinfo{person}{Zhifeng Chen},
  \bibinfo{person}{Andy Davis}, \bibinfo{person}{Jeffrey Dean},
  \bibinfo{person}{Matthieu Devin}, \bibinfo{person}{Sanjay Ghemawat},
  \bibinfo{person}{Geoffrey Irving}, \bibinfo{person}{Michael Isard},
  {et~al\mbox{.}}} \bibinfo{year}{2016}\natexlab{}.
\newblock \showarticletitle{Tensorflow: A system for large-scale machine
  learning}. In \bibinfo{booktitle}{\emph{12th {USENIX} symposium on operating
  systems design and implementation ({OSDI} 16)}}. \bibinfo{pages}{265--283}.
\newblock


\bibitem[\protect\citeauthoryear{Abichou, Zahrouni, and Potier-Ferry}{Abichou
  et~al\mbox{.}}{2002}]%
        {abichou2002asymptotic}
\bibfield{author}{\bibinfo{person}{H Abichou}, \bibinfo{person}{H Zahrouni},
  {and} \bibinfo{person}{M Potier-Ferry}.} \bibinfo{year}{2002}\natexlab{}.
\newblock \showarticletitle{Asymptotic numerical method for problems coupling
  several nonlinearities}.
\newblock \bibinfo{journal}{\emph{Computer Methods in Applied Mechanics and
  Engineering}} \bibinfo{volume}{191}, \bibinfo{number}{51-52}
  (\bibinfo{year}{2002}), \bibinfo{pages}{5795--5810}.
\newblock


\bibitem[\protect\citeauthoryear{Allgower and Georg}{Allgower and
  Georg}{2003}]%
        {allgower2003introduction}
\bibfield{author}{\bibinfo{person}{Eugene~L Allgower} {and}
  \bibinfo{person}{Kurt Georg}.} \bibinfo{year}{2003}\natexlab{}.
\newblock \bibinfo{booktitle}{\emph{Introduction to numerical continuation
  methods}}.
\newblock \bibinfo{publisher}{SIAM}.
\newblock


\bibitem[\protect\citeauthoryear{Azrar, Boutyour, and Potier-Ferry}{Azrar
  et~al\mbox{.}}{2002}]%
        {azrar2002non}
\bibfield{author}{\bibinfo{person}{L Azrar}, \bibinfo{person}{EH Boutyour},
  {and} \bibinfo{person}{M Potier-Ferry}.} \bibinfo{year}{2002}\natexlab{}.
\newblock \showarticletitle{Non-linear forced vibrations of plates by an
  asymptotic--numerical method}.
\newblock \bibinfo{journal}{\emph{Journal of Sound and Vibration}}
  \bibinfo{volume}{252}, \bibinfo{number}{4} (\bibinfo{year}{2002}),
  \bibinfo{pages}{657--674}.
\newblock


\bibitem[\protect\citeauthoryear{Azrar, Cochelin, Damil, and
  Potier-Ferry}{Azrar et~al\mbox{.}}{1993}]%
        {azrar1993asymptotic}
\bibfield{author}{\bibinfo{person}{L Azrar}, \bibinfo{person}{B Cochelin},
  \bibinfo{person}{N Damil}, {and} \bibinfo{person}{M Potier-Ferry}.}
  \bibinfo{year}{1993}\natexlab{}.
\newblock \showarticletitle{An asymptotic-numerical method to compute the
  postbuckling behaviour of elastic plates and shells}.
\newblock \bibinfo{journal}{\emph{International journal for numerical methods
  in engineering}} \bibinfo{volume}{36}, \bibinfo{number}{8}
  (\bibinfo{year}{1993}), \bibinfo{pages}{1251--1277}.
\newblock


\bibitem[\protect\citeauthoryear{Basdevant}{Basdevant}{1972}]%
        {basdevant1972pade}
\bibfield{author}{\bibinfo{person}{JL Basdevant}.}
  \bibinfo{year}{1972}\natexlab{}.
\newblock \showarticletitle{The Pad{\'e} approximation and its physical
  applications}.
\newblock \bibinfo{journal}{\emph{Fortschritte der Physik}}
  \bibinfo{volume}{20}, \bibinfo{number}{5} (\bibinfo{year}{1972}),
  \bibinfo{pages}{283--331}.
\newblock


\bibitem[\protect\citeauthoryear{Baydin, Pearlmutter, Radul, and
  Siskind}{Baydin et~al\mbox{.}}{2018}]%
        {atilim18automatic}
\bibfield{author}{\bibinfo{person}{Atilim~Gunes Baydin},
  \bibinfo{person}{Barak~A. Pearlmutter}, \bibinfo{person}{Alexey~Andreyevich
  Radul}, {and} \bibinfo{person}{Jeffrey~Mark Siskind}.}
  \bibinfo{year}{2018}\natexlab{}.
\newblock \showarticletitle{Automatic Differentiation in Machine Learning: a
  Survey}.
\newblock \bibinfo{journal}{\emph{Journal of Machine Learning Research}}
  \bibinfo{volume}{18}, \bibinfo{number}{153} (\bibinfo{year}{2018}),
  \bibinfo{pages}{1--43}.
\newblock
\urldef\tempurl%
\url{http://jmlr.org/papers/v18/17-468.html}
\showURL{%
\tempurl}


\bibitem[\protect\citeauthoryear{Bender, M{\"u}ller, Otaduy, Teschner, and
  Macklin}{Bender et~al\mbox{.}}{2014}]%
        {bender2014survey}
\bibfield{author}{\bibinfo{person}{Jan Bender}, \bibinfo{person}{Matthias
  M{\"u}ller}, \bibinfo{person}{Miguel~A Otaduy}, \bibinfo{person}{Matthias
  Teschner}, {and} \bibinfo{person}{Miles Macklin}.}
  \bibinfo{year}{2014}\natexlab{}.
\newblock \showarticletitle{A survey on position-based simulation methods in
  computer graphics}. In \bibinfo{booktitle}{\emph{Computer graphics forum}},
  Vol.~\bibinfo{volume}{33}. Wiley Online Library, \bibinfo{pages}{228--251}.
\newblock


\bibitem[\protect\citeauthoryear{Bonet and Wood}{Bonet and Wood}{2008}]%
        {bonet_wood_2008}
\bibfield{author}{\bibinfo{person}{Javier Bonet} {and}
  \bibinfo{person}{Richard~D. Wood}.} \bibinfo{year}{2008}\natexlab{}.
\newblock \bibinfo{booktitle}{\emph{Nonlinear Continuum Mechanics for Finite
  Element Analysis} (\bibinfo{edition}{2} ed.)}.
\newblock \bibinfo{publisher}{Cambridge University Press}.
\newblock
\urldef\tempurl%
\url{https://doi.org/10.1017/CBO9780511755446}
\showDOI{\tempurl}


\bibitem[\protect\citeauthoryear{Bonnans, Gilbert, Lemar{\'e}chal, and
  Sagastiz{\'a}bal}{Bonnans et~al\mbox{.}}{2006}]%
        {bonnans2006numerical}
\bibfield{author}{\bibinfo{person}{Joseph-Fr{\'e}d{\'e}ric Bonnans},
  \bibinfo{person}{Jean~Charles Gilbert}, \bibinfo{person}{Claude
  Lemar{\'e}chal}, {and} \bibinfo{person}{Claudia~A Sagastiz{\'a}bal}.}
  \bibinfo{year}{2006}\natexlab{}.
\newblock \bibinfo{booktitle}{\emph{Numerical optimization: theoretical and
  practical aspects}}.
\newblock \bibinfo{publisher}{Springer Science \& Business Media}.
\newblock


\bibitem[\protect\citeauthoryear{Bouaziz, Martin, Liu, Kavan, and
  Pauly}{Bouaziz et~al\mbox{.}}{2014}]%
        {bouaziz2014projective}
\bibfield{author}{\bibinfo{person}{Sofien Bouaziz}, \bibinfo{person}{Sebastian
  Martin}, \bibinfo{person}{Tiantian Liu}, \bibinfo{person}{Ladislav Kavan},
  {and} \bibinfo{person}{Mark Pauly}.} \bibinfo{year}{2014}\natexlab{}.
\newblock \showarticletitle{Projective dynamics: fusing constraint projections
  for fast simulation}.
\newblock \bibinfo{journal}{\emph{ACM Transactions on Graphics (TOG)}}
  \bibinfo{volume}{33}, \bibinfo{number}{4} (\bibinfo{year}{2014}),
  \bibinfo{pages}{1--11}.
\newblock


\bibitem[\protect\citeauthoryear{Boutyour, Zahrouni, Potier-Ferry, and
  Boudi}{Boutyour et~al\mbox{.}}{2004}]%
        {boutyour2004bifurcation}
\bibfield{author}{\bibinfo{person}{EH Boutyour}, \bibinfo{person}{H Zahrouni},
  \bibinfo{person}{M Potier-Ferry}, {and} \bibinfo{person}{M Boudi}.}
  \bibinfo{year}{2004}\natexlab{}.
\newblock \showarticletitle{Bifurcation points and bifurcated branches by an
  asymptotic numerical method and Pad{\'e} approximants}.
\newblock \bibinfo{journal}{\emph{Internat. J. Numer. Methods Engrg.}}
  \bibinfo{volume}{60}, \bibinfo{number}{12} (\bibinfo{year}{2004}),
  \bibinfo{pages}{1987--2012}.
\newblock


\bibitem[\protect\citeauthoryear{Brent}{Brent}{2013}]%
        {brent2013algorithms}
\bibfield{author}{\bibinfo{person}{Richard~P Brent}.}
  \bibinfo{year}{2013}\natexlab{}.
\newblock \bibinfo{booktitle}{\emph{Algorithms for minimization without
  derivatives}}.
\newblock \bibinfo{publisher}{Courier Corporation}.
\newblock


\bibitem[\protect\citeauthoryear{Brent and Kung}{Brent and Kung}{1978}]%
        {brent1978fast}
\bibfield{author}{\bibinfo{person}{Richard~P Brent} {and}
  \bibinfo{person}{Hsiang~T Kung}.} \bibinfo{year}{1978}\natexlab{}.
\newblock \showarticletitle{Fast algorithms for manipulating formal power
  series}.
\newblock \bibinfo{journal}{\emph{Journal of the ACM (JACM)}}
  \bibinfo{volume}{25}, \bibinfo{number}{4} (\bibinfo{year}{1978}),
  \bibinfo{pages}{581--595}.
\newblock


\bibitem[\protect\citeauthoryear{Chao, Pinkall, Sanan, and Schr{\"o}der}{Chao
  et~al\mbox{.}}{2010}]%
        {chao2010simple}
\bibfield{author}{\bibinfo{person}{Isaac Chao}, \bibinfo{person}{Ulrich
  Pinkall}, \bibinfo{person}{Patrick Sanan}, {and} \bibinfo{person}{Peter
  Schr{\"o}der}.} \bibinfo{year}{2010}\natexlab{}.
\newblock \showarticletitle{A simple geometric model for elastic deformations}.
\newblock \bibinfo{journal}{\emph{ACM transactions on graphics (TOG)}}
  \bibinfo{volume}{29}, \bibinfo{number}{4} (\bibinfo{year}{2010}),
  \bibinfo{pages}{1--6}.
\newblock


\bibitem[\protect\citeauthoryear{Charpentier, Lejeune, and
  Potier-Ferry}{Charpentier et~al\mbox{.}}{2008}]%
        {charpentier2008diamant}
\bibfield{author}{\bibinfo{person}{Isabelle Charpentier},
  \bibinfo{person}{Arnaud Lejeune}, {and} \bibinfo{person}{Michel
  Potier-Ferry}.} \bibinfo{year}{2008}\natexlab{}.
\newblock \showarticletitle{The diamant approach for an efficient automatic
  differentiation of the asymptotic numerical method}.
\newblock In \bibinfo{booktitle}{\emph{Advances in automatic differentiation}}.
  \bibinfo{publisher}{Springer}, \bibinfo{pages}{139--149}.
\newblock


\bibitem[\protect\citeauthoryear{Chen, Moreau, Jiang, Zheng, Yan, Shen, Cowan,
  Wang, Hu, Ceze, et~al\mbox{.}}{Chen et~al\mbox{.}}{2018}]%
        {chen2018tvm}
\bibfield{author}{\bibinfo{person}{Tianqi Chen}, \bibinfo{person}{Thierry
  Moreau}, \bibinfo{person}{Ziheng Jiang}, \bibinfo{person}{Lianmin Zheng},
  \bibinfo{person}{Eddie Yan}, \bibinfo{person}{Haichen Shen},
  \bibinfo{person}{Meghan Cowan}, \bibinfo{person}{Leyuan Wang},
  \bibinfo{person}{Yuwei Hu}, \bibinfo{person}{Luis Ceze}, {et~al\mbox{.}}}
  \bibinfo{year}{2018}\natexlab{}.
\newblock \showarticletitle{{TVM}: An automated end-to-end optimizing compiler
  for deep learning}. In \bibinfo{booktitle}{\emph{13th {USENIX} Symposium on
  Operating Systems Design and Implementation {OSDI} 18)}}.
  \bibinfo{pages}{578--594}.
\newblock


\bibitem[\protect\citeauthoryear{Chen, Xu, Zhang, and Guestrin}{Chen
  et~al\mbox{.}}{2016}]%
        {chen2016training}
\bibfield{author}{\bibinfo{person}{Tianqi Chen}, \bibinfo{person}{Bing Xu},
  \bibinfo{person}{Chiyuan Zhang}, {and} \bibinfo{person}{Carlos Guestrin}.}
  \bibinfo{year}{2016}\natexlab{}.
\newblock \showarticletitle{Training deep nets with sublinear memory cost}.
\newblock \bibinfo{journal}{\emph{arXiv preprint arXiv:1604.06174}}
  (\bibinfo{year}{2016}).
\newblock


\bibitem[\protect\citeauthoryear{Chen, Zheng, Xu, and Zhou}{Chen
  et~al\mbox{.}}{2014}]%
        {chen2014asymptotic}
\bibfield{author}{\bibinfo{person}{Xiang Chen}, \bibinfo{person}{Changxi
  Zheng}, \bibinfo{person}{Weiwei Xu}, {and} \bibinfo{person}{Kun Zhou}.}
  \bibinfo{year}{2014}\natexlab{}.
\newblock \showarticletitle{An asymptotic numerical method for inverse elastic
  shape design}.
\newblock \bibinfo{journal}{\emph{ACM Transactions on Graphics (TOG)}}
  \bibinfo{volume}{33}, \bibinfo{number}{4} (\bibinfo{year}{2014}),
  \bibinfo{pages}{1--11}.
\newblock


\bibitem[\protect\citeauthoryear{Choi and Ko}{Choi and Ko}{2005}]%
        {choi2005modal}
\bibfield{author}{\bibinfo{person}{Min~Gyu Choi} {and}
  \bibinfo{person}{Hyeong-Seok Ko}.} \bibinfo{year}{2005}\natexlab{}.
\newblock \showarticletitle{Modal warping: Real-time simulation of large
  rotational deformation and manipulation}.
\newblock \bibinfo{journal}{\emph{IEEE Transactions on Visualization and
  Computer Graphics}} \bibinfo{volume}{11}, \bibinfo{number}{1}
  (\bibinfo{year}{2005}), \bibinfo{pages}{91--101}.
\newblock


\bibitem[\protect\citeauthoryear{Cochelin}{Cochelin}{1994}]%
        {cochelin1994path}
\bibfield{author}{\bibinfo{person}{Bruno Cochelin}.}
  \bibinfo{year}{1994}\natexlab{}.
\newblock \showarticletitle{A path-following technique via an
  asymptotic-numerical method}.
\newblock \bibinfo{journal}{\emph{Computers \& structures}}
  \bibinfo{volume}{53}, \bibinfo{number}{5} (\bibinfo{year}{1994}),
  \bibinfo{pages}{1181--1192}.
\newblock


\bibitem[\protect\citeauthoryear{Cochelin, Damil, and Potier-Ferry}{Cochelin
  et~al\mbox{.}}{1994a}]%
        {cochelin1994asymptoticpade}
\bibfield{author}{\bibinfo{person}{Bruno Cochelin}, \bibinfo{person}{Noureddine
  Damil}, {and} \bibinfo{person}{Michel Potier-Ferry}.}
  \bibinfo{year}{1994}\natexlab{a}.
\newblock \showarticletitle{Asymptotic--numerical methods and {P}ad{\'e}
  approximants for non-linear elastic structures}.
\newblock \bibinfo{journal}{\emph{International journal for numerical methods
  in engineering}} \bibinfo{volume}{37}, \bibinfo{number}{7}
  (\bibinfo{year}{1994}), \bibinfo{pages}{1187--1213}.
\newblock


\bibitem[\protect\citeauthoryear{Cochelin, Damil, and Potier-Ferry}{Cochelin
  et~al\mbox{.}}{1994b}]%
        {cochelin1994asymptotic}
\bibfield{author}{\bibinfo{person}{Bruno Cochelin}, \bibinfo{person}{Noureddine
  Damil}, {and} \bibinfo{person}{Michel Potier-Ferry}.}
  \bibinfo{year}{1994}\natexlab{b}.
\newblock \showarticletitle{The asymptotic-numerical method: an efficient
  perturbation technique for nonlinear structural mechanics}.
\newblock \bibinfo{journal}{\emph{Revue europ{\'e}enne des {\'e}l{\'e}ments
  finis}} \bibinfo{volume}{3}, \bibinfo{number}{2} (\bibinfo{year}{1994}),
  \bibinfo{pages}{281--297}.
\newblock


\bibitem[\protect\citeauthoryear{Damil and Potier-Ferry}{Damil and
  Potier-Ferry}{1990}]%
        {damil1990new}
\bibfield{author}{\bibinfo{person}{Noureddine Damil} {and}
  \bibinfo{person}{Michel Potier-Ferry}.} \bibinfo{year}{1990}\natexlab{}.
\newblock \showarticletitle{A new method to compute perturbed bifurcations:
  application to the buckling of imperfect elastic structures}.
\newblock \bibinfo{journal}{\emph{International Journal of Engineering
  Science}} \bibinfo{volume}{28}, \bibinfo{number}{9} (\bibinfo{year}{1990}),
  \bibinfo{pages}{943--957}.
\newblock


\bibitem[\protect\citeauthoryear{Daya and Potier-Ferry}{Daya and
  Potier-Ferry}{2001}]%
        {daya2001numerical}
\bibfield{author}{\bibinfo{person}{EM Daya} {and} \bibinfo{person}{M
  Potier-Ferry}.} \bibinfo{year}{2001}\natexlab{}.
\newblock \showarticletitle{A numerical method for nonlinear eigenvalue
  problems application to vibrations of viscoelastic structures}.
\newblock \bibinfo{journal}{\emph{Computers \& Structures}}
  \bibinfo{volume}{79}, \bibinfo{number}{5} (\bibinfo{year}{2001}),
  \bibinfo{pages}{533--541}.
\newblock


\bibitem[\protect\citeauthoryear{Duenser, Poranne, Thomaszewski, and
  Coros}{Duenser et~al\mbox{.}}{2020}]%
        {duenser2020robocut}
\bibfield{author}{\bibinfo{person}{Simon Duenser}, \bibinfo{person}{Roi
  Poranne}, \bibinfo{person}{Bernhard Thomaszewski}, {and}
  \bibinfo{person}{Stelian Coros}.} \bibinfo{year}{2020}\natexlab{}.
\newblock \showarticletitle{RoboCut: hot-wire cutting with robot-controlled
  flexible rods}.
\newblock \bibinfo{journal}{\emph{ACM Transactions on Graphics (TOG)}}
  \bibinfo{volume}{39}, \bibinfo{number}{4} (\bibinfo{year}{2020}),
  \bibinfo{pages}{98--1}.
\newblock


\bibitem[\protect\citeauthoryear{Elhage-Hussein, Potier-Ferry, and
  Damil}{Elhage-Hussein et~al\mbox{.}}{2000}]%
        {elhage2000numerical}
\bibfield{author}{\bibinfo{person}{Ahmad Elhage-Hussein},
  \bibinfo{person}{Michel Potier-Ferry}, {and} \bibinfo{person}{Noureddine
  Damil}.} \bibinfo{year}{2000}\natexlab{}.
\newblock \showarticletitle{A numerical continuation method based on {P}ad{\'e}
  approximants}.
\newblock \bibinfo{journal}{\emph{International Journal of Solids and
  Structures}} \bibinfo{volume}{37}, \bibinfo{number}{46-47}
  (\bibinfo{year}{2000}), \bibinfo{pages}{6981--7001}.
\newblock


\bibitem[\protect\citeauthoryear{Gilles, Bousquet, Faure, and Pai}{Gilles
  et~al\mbox{.}}{2011}]%
        {gilles2011frame}
\bibfield{author}{\bibinfo{person}{Benjamin Gilles}, \bibinfo{person}{Guillaume
  Bousquet}, \bibinfo{person}{Francois Faure}, {and} \bibinfo{person}{Dinesh~K
  Pai}.} \bibinfo{year}{2011}\natexlab{}.
\newblock \showarticletitle{Frame-based elastic models}.
\newblock \bibinfo{journal}{\emph{ACM transactions on graphics (TOG)}}
  \bibinfo{volume}{30}, \bibinfo{number}{2} (\bibinfo{year}{2011}),
  \bibinfo{pages}{1--12}.
\newblock


\bibitem[\protect\citeauthoryear{Golub and Robertson}{Golub and
  Robertson}{1967}]%
        {golub1967generalized}
\bibfield{author}{\bibinfo{person}{Gene~H Golub} {and} \bibinfo{person}{TN
  Robertson}.} \bibinfo{year}{1967}\natexlab{}.
\newblock \showarticletitle{A generalized Bairstow algorithm}.
\newblock \bibinfo{journal}{\emph{Commun. ACM}} \bibinfo{volume}{10},
  \bibinfo{number}{6} (\bibinfo{year}{1967}), \bibinfo{pages}{371--373}.
\newblock


\bibitem[\protect\citeauthoryear{Griewank and Walther}{Griewank and
  Walther}{2008}]%
        {griewank2008evaluating}
\bibfield{author}{\bibinfo{person}{Andreas Griewank} {and}
  \bibinfo{person}{Andrea Walther}.} \bibinfo{year}{2008}\natexlab{}.
\newblock \bibinfo{booktitle}{\emph{Evaluating derivatives: principles and
  techniques of algorithmic differentiation}}.
\newblock \bibinfo{publisher}{SIAM}.
\newblock


\bibitem[\protect\citeauthoryear{Guennebaud, Jacob, et~al\mbox{.}}{Guennebaud
  et~al\mbox{.}}{2010}]%
        {eigenweb}
\bibfield{author}{\bibinfo{person}{Ga\"{e}l Guennebaud},
  \bibinfo{person}{Beno\^{i}t Jacob}, {et~al\mbox{.}}}
  \bibinfo{year}{2010}\natexlab{}.
\newblock \bibinfo{title}{Eigen v3}.
\newblock \bibinfo{howpublished}{http://eigen.tuxfamily.org}.
\newblock


\bibitem[\protect\citeauthoryear{Guillot, Cochelin, and Vergez}{Guillot
  et~al\mbox{.}}{2019}]%
        {guillot2019generic}
\bibfield{author}{\bibinfo{person}{Louis Guillot}, \bibinfo{person}{Bruno
  Cochelin}, {and} \bibinfo{person}{Christophe Vergez}.}
  \bibinfo{year}{2019}\natexlab{}.
\newblock \showarticletitle{A generic and efficient Taylor series--based
  continuation method using a quadratic recast of smooth nonlinear systems}.
\newblock \bibinfo{journal}{\emph{International Journal for numerical methods
  in Engineering}} \bibinfo{volume}{119}, \bibinfo{number}{4}
  (\bibinfo{year}{2019}), \bibinfo{pages}{261--280}.
\newblock


\bibitem[\protect\citeauthoryear{Hrom{\v{c}}{\'i}k and
  {\v{S}}ebekt}{Hrom{\v{c}}{\'i}k and {\v{S}}ebekt}{1999}]%
        {hromvcik1999new}
\bibfield{author}{\bibinfo{person}{M Hrom{\v{c}}{\'i}k} {and}
  \bibinfo{person}{M {\v{S}}ebekt}.} \bibinfo{year}{1999}\natexlab{}.
\newblock \showarticletitle{New algorithm for polynomial matrix determinant
  based on FFT}. In \bibinfo{booktitle}{\emph{1999 European Control Conference
  (ECC)}}. IEEE, \bibinfo{pages}{4173--4177}.
\newblock


\bibitem[\protect\citeauthoryear{Jouppi, Young, Patil, Patterson, Agrawal,
  Bajwa, Bates, Bhatia, Boden, Borchers, Boyle, Cantin, Chao, Clark, Coriell,
  Daley, Dau, Dean, Gelb, Ghaemmaghami, Gottipati, Gulland, Hagmann, Ho,
  Hogberg, Hu, Hundt, Hurt, Ibarz, Jaffey, Jaworski, Kaplan, Khaitan,
  Killebrew, Koch, Kumar, Lacy, Laudon, Law, Le, Leary, Liu, Lucke, Lundin,
  MacKean, Maggiore, Mahony, Miller, Nagarajan, Narayanaswami, Ni, Nix, Norrie,
  Omernick, Penukonda, Phelps, Ross, Ross, Salek, Samadiani, Severn, Sizikov,
  Snelham, Souter, Steinberg, Swing, Tan, Thorson, Tian, Toma, Tuttle,
  Vasudevan, Walter, Wang, Wilcox, and Yoon}{Jouppi et~al\mbox{.}}{2017}]%
        {jouppi2017in}
\bibfield{author}{\bibinfo{person}{Norman~P. Jouppi}, \bibinfo{person}{Cliff
  Young}, \bibinfo{person}{Nishant Patil}, \bibinfo{person}{David Patterson},
  \bibinfo{person}{Gaurav Agrawal}, \bibinfo{person}{Raminder Bajwa},
  \bibinfo{person}{Sarah Bates}, \bibinfo{person}{Suresh Bhatia},
  \bibinfo{person}{Nan Boden}, \bibinfo{person}{Al Borchers},
  \bibinfo{person}{Rick Boyle}, \bibinfo{person}{Pierre-luc Cantin},
  \bibinfo{person}{Clifford Chao}, \bibinfo{person}{Chris Clark},
  \bibinfo{person}{Jeremy Coriell}, \bibinfo{person}{Mike Daley},
  \bibinfo{person}{Matt Dau}, \bibinfo{person}{Jeffrey Dean},
  \bibinfo{person}{Ben Gelb}, \bibinfo{person}{Tara~Vazir Ghaemmaghami},
  \bibinfo{person}{Rajendra Gottipati}, \bibinfo{person}{William Gulland},
  \bibinfo{person}{Robert Hagmann}, \bibinfo{person}{C.~Richard Ho},
  \bibinfo{person}{Doug Hogberg}, \bibinfo{person}{John Hu},
  \bibinfo{person}{Robert Hundt}, \bibinfo{person}{Dan Hurt},
  \bibinfo{person}{Julian Ibarz}, \bibinfo{person}{Aaron Jaffey},
  \bibinfo{person}{Alek Jaworski}, \bibinfo{person}{Alexander Kaplan},
  \bibinfo{person}{Harshit Khaitan}, \bibinfo{person}{Daniel Killebrew},
  \bibinfo{person}{Andy Koch}, \bibinfo{person}{Naveen Kumar},
  \bibinfo{person}{Steve Lacy}, \bibinfo{person}{James Laudon},
  \bibinfo{person}{James Law}, \bibinfo{person}{Diemthu Le},
  \bibinfo{person}{Chris Leary}, \bibinfo{person}{Zhuyuan Liu},
  \bibinfo{person}{Kyle Lucke}, \bibinfo{person}{Alan Lundin},
  \bibinfo{person}{Gordon MacKean}, \bibinfo{person}{Adriana Maggiore},
  \bibinfo{person}{Maire Mahony}, \bibinfo{person}{Kieran Miller},
  \bibinfo{person}{Rahul Nagarajan}, \bibinfo{person}{Ravi Narayanaswami},
  \bibinfo{person}{Ray Ni}, \bibinfo{person}{Kathy Nix},
  \bibinfo{person}{Thomas Norrie}, \bibinfo{person}{Mark Omernick},
  \bibinfo{person}{Narayana Penukonda}, \bibinfo{person}{Andy Phelps},
  \bibinfo{person}{Jonathan Ross}, \bibinfo{person}{Matt Ross},
  \bibinfo{person}{Amir Salek}, \bibinfo{person}{Emad Samadiani},
  \bibinfo{person}{Chris Severn}, \bibinfo{person}{Gregory Sizikov},
  \bibinfo{person}{Matthew Snelham}, \bibinfo{person}{Jed Souter},
  \bibinfo{person}{Dan Steinberg}, \bibinfo{person}{Andy Swing},
  \bibinfo{person}{Mercedes Tan}, \bibinfo{person}{Gregory Thorson},
  \bibinfo{person}{Bo Tian}, \bibinfo{person}{Horia Toma},
  \bibinfo{person}{Erick Tuttle}, \bibinfo{person}{Vijay Vasudevan},
  \bibinfo{person}{Richard Walter}, \bibinfo{person}{Walter Wang},
  \bibinfo{person}{Eric Wilcox}, {and} \bibinfo{person}{Doe~Hyun Yoon}.}
  \bibinfo{year}{2017}\natexlab{}.
\newblock \showarticletitle{In-Datacenter Performance Analysis of a Tensor
  Processing Unit}. In \bibinfo{booktitle}{\emph{Proceedings of the 44th Annual
  International Symposium on Computer Architecture}} (Toronto, ON, Canada)
  \emph{(\bibinfo{series}{ISCA '17})}. \bibinfo{publisher}{Association for
  Computing Machinery}, \bibinfo{address}{New York, NY, USA},
  \bibinfo{pages}{1–12}.
\newblock
\showISBNx{9781450348928}
\urldef\tempurl%
\url{https://doi.org/10.1145/3079856.3080246}
\showDOI{\tempurl}


\bibitem[\protect\citeauthoryear{Kim and Eberle}{Kim and Eberle}{2020}]%
        {kim2020dynamic}
\bibfield{author}{\bibinfo{person}{Theodore Kim} {and} \bibinfo{person}{David
  Eberle}.} \bibinfo{year}{2020}\natexlab{}.
\newblock \showarticletitle{Dynamic deformables: implementation and production
  practicalities}.
\newblock In \bibinfo{booktitle}{\emph{ACM SIGGRAPH 2020 Courses}}.
  \bibinfo{pages}{1--182}.
\newblock


\bibitem[\protect\citeauthoryear{Kim and James}{Kim and James}{2009}]%
        {kim2009skipping}
\bibfield{author}{\bibinfo{person}{Theodore Kim} {and} \bibinfo{person}{Doug~L
  James}.} \bibinfo{year}{2009}\natexlab{}.
\newblock \showarticletitle{Skipping steps in deformable simulation with online
  model reduction}.
\newblock In \bibinfo{booktitle}{\emph{ACM SIGGRAPH Asia 2009 papers}}.
  \bibinfo{pages}{1--9}.
\newblock


\bibitem[\protect\citeauthoryear{Lattner and Adve}{Lattner and Adve}{2004}]%
        {lattner2004llvm}
\bibfield{author}{\bibinfo{person}{Chris Lattner} {and} \bibinfo{person}{Vikram
  Adve}.} \bibinfo{year}{2004}\natexlab{}.
\newblock \showarticletitle{LLVM: A compilation framework for lifelong program
  analysis \& transformation}. In \bibinfo{booktitle}{\emph{International
  Symposium on Code Generation and Optimization, 2004. CGO 2004.}} IEEE,
  \bibinfo{pages}{75--86}.
\newblock


\bibitem[\protect\citeauthoryear{Lattner, Amini, Bondhugula, Cohen, Davis,
  Pienaar, Riddle, Shpeisman, Vasilache, and Zinenko}{Lattner
  et~al\mbox{.}}{2020}]%
        {lattner2020mlir}
\bibfield{author}{\bibinfo{person}{Chris Lattner}, \bibinfo{person}{Mehdi
  Amini}, \bibinfo{person}{Uday Bondhugula}, \bibinfo{person}{Albert Cohen},
  \bibinfo{person}{Andy Davis}, \bibinfo{person}{Jacques Pienaar},
  \bibinfo{person}{River Riddle}, \bibinfo{person}{Tatiana Shpeisman},
  \bibinfo{person}{Nicolas Vasilache}, {and} \bibinfo{person}{Oleksandr
  Zinenko}.} \bibinfo{year}{2020}\natexlab{}.
\newblock \showarticletitle{{MLIR}: A compiler infrastructure for the end of
  {M}oore's law}.
\newblock \bibinfo{journal}{\emph{arXiv preprint arXiv:2002.11054}}
  (\bibinfo{year}{2020}).
\newblock


\bibitem[\protect\citeauthoryear{Lazarus, Miller, and Reis}{Lazarus
  et~al\mbox{.}}{2013}]%
        {lazarus2013continuation}
\bibfield{author}{\bibinfo{person}{Arnaud Lazarus}, \bibinfo{person}{JT
  Miller}, {and} \bibinfo{person}{Pedro~M Reis}.}
  \bibinfo{year}{2013}\natexlab{}.
\newblock \showarticletitle{Continuation of equilibria and stability of slender
  elastic rods using an asymptotic numerical method}.
\newblock \bibinfo{journal}{\emph{Journal of the Mechanics and Physics of
  Solids}} \bibinfo{volume}{61}, \bibinfo{number}{8} (\bibinfo{year}{2013}),
  \bibinfo{pages}{1712--1736}.
\newblock


\bibitem[\protect\citeauthoryear{Lejeune, B{\'e}chet, Boudaoud, Mathieu, and
  Potier-Ferry}{Lejeune et~al\mbox{.}}{2012}]%
        {lejeune2012object}
\bibfield{author}{\bibinfo{person}{Arnaud Lejeune}, \bibinfo{person}{Fabien
  B{\'e}chet}, \bibinfo{person}{Hakim Boudaoud}, \bibinfo{person}{Norman
  Mathieu}, {and} \bibinfo{person}{Michel Potier-Ferry}.}
  \bibinfo{year}{2012}\natexlab{}.
\newblock \showarticletitle{Object-oriented design to automate a high order
  non-linear solver based on asymptotic numerical method}.
\newblock \bibinfo{journal}{\emph{Advances in Engineering Software}}
  \bibinfo{volume}{48} (\bibinfo{year}{2012}), \bibinfo{pages}{70--88}.
\newblock


\bibitem[\protect\citeauthoryear{Liao, Liu, Wang, and Xiang}{Liao
  et~al\mbox{.}}{2019}]%
        {liao2019differentiable}
\bibfield{author}{\bibinfo{person}{Hai-Jun Liao}, \bibinfo{person}{Jin-Guo
  Liu}, \bibinfo{person}{Lei Wang}, {and} \bibinfo{person}{Tao Xiang}.}
  \bibinfo{year}{2019}\natexlab{}.
\newblock \showarticletitle{Differentiable programming tensor networks}.
\newblock \bibinfo{journal}{\emph{Physical Review X}} \bibinfo{volume}{9},
  \bibinfo{number}{3} (\bibinfo{year}{2019}), \bibinfo{pages}{031041}.
\newblock


\bibitem[\protect\citeauthoryear{Limpaecher, Feltman, Treuille, and
  Cohen}{Limpaecher et~al\mbox{.}}{2013}]%
        {limpaecher2013real}
\bibfield{author}{\bibinfo{person}{Alex Limpaecher}, \bibinfo{person}{Nicolas
  Feltman}, \bibinfo{person}{Adrien Treuille}, {and} \bibinfo{person}{Michael
  Cohen}.} \bibinfo{year}{2013}\natexlab{}.
\newblock \showarticletitle{Real-time drawing assistance through
  crowdsourcing}.
\newblock \bibinfo{journal}{\emph{ACM Transactions on Graphics (TOG)}}
  \bibinfo{volume}{32}, \bibinfo{number}{4} (\bibinfo{year}{2013}),
  \bibinfo{pages}{1--8}.
\newblock


\bibitem[\protect\citeauthoryear{Najah, Cochelin, Damil, and
  Potier-Ferry}{Najah et~al\mbox{.}}{1998}]%
        {najah1998critical}
\bibfield{author}{\bibinfo{person}{A Najah}, \bibinfo{person}{B Cochelin},
  \bibinfo{person}{N Damil}, {and} \bibinfo{person}{M Potier-Ferry}.}
  \bibinfo{year}{1998}\natexlab{}.
\newblock \showarticletitle{A critical review of asymptotic numerical methods}.
\newblock \bibinfo{journal}{\emph{Archives of Computational Methods in
  Engineering}} \bibinfo{volume}{5}, \bibinfo{number}{1}
  (\bibinfo{year}{1998}), \bibinfo{pages}{31--50}.
\newblock


\bibitem[\protect\citeauthoryear{Papadopoulo and Lourakis}{Papadopoulo and
  Lourakis}{2000}]%
        {papadopoulo2000estimating}
\bibfield{author}{\bibinfo{person}{Th{\'e}odore Papadopoulo} {and}
  \bibinfo{person}{Manolis~IA Lourakis}.} \bibinfo{year}{2000}\natexlab{}.
\newblock \showarticletitle{Estimating the jacobian of the singular value
  decomposition: Theory and applications}. In
  \bibinfo{booktitle}{\emph{European Conference on Computer Vision}}. Springer,
  \bibinfo{pages}{554--570}.
\newblock


\bibitem[\protect\citeauthoryear{Roman}{Roman}{1980}]%
        {roman1980formula}
\bibfield{author}{\bibinfo{person}{Steven Roman}.}
  \bibinfo{year}{1980}\natexlab{}.
\newblock \showarticletitle{The formula of F{\`a}a di {B}runo}.
\newblock \bibinfo{journal}{\emph{The American Mathematical Monthly}}
  \bibinfo{volume}{87}, \bibinfo{number}{10} (\bibinfo{year}{1980}),
  \bibinfo{pages}{805--809}.
\newblock


\bibitem[\protect\citeauthoryear{Seeger, Hetzel, Dai, Meissner, and
  Lawrence}{Seeger et~al\mbox{.}}{2017}]%
        {seeger2017auto}
\bibfield{author}{\bibinfo{person}{Matthias Seeger}, \bibinfo{person}{Asmus
  Hetzel}, \bibinfo{person}{Zhenwen Dai}, \bibinfo{person}{Eric Meissner},
  {and} \bibinfo{person}{Neil~D Lawrence}.} \bibinfo{year}{2017}\natexlab{}.
\newblock \showarticletitle{Auto-differentiating linear algebra}.
\newblock \bibinfo{journal}{\emph{arXiv preprint arXiv:1710.08717}}
  (\bibinfo{year}{2017}).
\newblock


\bibitem[\protect\citeauthoryear{Shtengel, Poranne, Sorkine-Hornung, Kovalsky,
  and Lipman}{Shtengel et~al\mbox{.}}{2017}]%
        {shtengel2017geometric}
\bibfield{author}{\bibinfo{person}{Anna Shtengel}, \bibinfo{person}{Roi
  Poranne}, \bibinfo{person}{Olga Sorkine-Hornung}, \bibinfo{person}{Shahar~Z
  Kovalsky}, {and} \bibinfo{person}{Yaron Lipman}.}
  \bibinfo{year}{2017}\natexlab{}.
\newblock \showarticletitle{Geometric optimization via composite majorization.}
\newblock \bibinfo{journal}{\emph{ACM Trans. Graph.}} \bibinfo{volume}{36},
  \bibinfo{number}{4} (\bibinfo{year}{2017}), \bibinfo{pages}{38--1}.
\newblock


\bibitem[\protect\citeauthoryear{Sifakis and Barbic}{Sifakis and
  Barbic}{2012}]%
        {sifakis2012fem}
\bibfield{author}{\bibinfo{person}{Eftychios Sifakis} {and}
  \bibinfo{person}{Jernej Barbic}.} \bibinfo{year}{2012}\natexlab{}.
\newblock \showarticletitle{FEM simulation of 3D deformable solids: a
  practitioner's guide to theory, discretization and model reduction}.
\newblock In \bibinfo{booktitle}{\emph{Acm siggraph 2012 courses}}.
  \bibinfo{pages}{1--50}.
\newblock


\bibitem[\protect\citeauthoryear{Smith, Goes, and Kim}{Smith
  et~al\mbox{.}}{2019}]%
        {smith2019analytic}
\bibfield{author}{\bibinfo{person}{Breannan Smith},
  \bibinfo{person}{Fernando~De Goes}, {and} \bibinfo{person}{Theodore Kim}.}
  \bibinfo{year}{2019}\natexlab{}.
\newblock \showarticletitle{Analytic Eigensystems for Isotropic Distortion
  Energies}.
\newblock \bibinfo{journal}{\emph{ACM Transactions on Graphics (TOG)}}
  \bibinfo{volume}{38}, \bibinfo{number}{1} (\bibinfo{year}{2019}),
  \bibinfo{pages}{1--15}.
\newblock


\bibitem[\protect\citeauthoryear{Sorkine and Alexa}{Sorkine and Alexa}{2007}]%
        {sorkine2007rigid}
\bibfield{author}{\bibinfo{person}{Olga Sorkine} {and} \bibinfo{person}{Marc
  Alexa}.} \bibinfo{year}{2007}\natexlab{}.
\newblock \showarticletitle{As-rigid-as-possible surface modeling}. In
  \bibinfo{booktitle}{\emph{Symposium on Geometry processing}},
  Vol.~\bibinfo{volume}{4}. \bibinfo{pages}{109--116}.
\newblock


\bibitem[\protect\citeauthoryear{{Theano Development Team}}{{Theano Development
  Team}}{2016}]%
        {2016arXiv160502688short}
\bibfield{author}{\bibinfo{person}{{Theano Development Team}}.}
  \bibinfo{year}{2016}\natexlab{}.
\newblock \showarticletitle{{Theano: A {Python} framework for fast computation
  of mathematical expressions}}.
\newblock \bibinfo{journal}{\emph{arXiv e-prints}}
  \bibinfo{volume}{abs/1605.02688} (\bibinfo{date}{May} \bibinfo{year}{2016}).
\newblock
\urldef\tempurl%
\url{http://arxiv.org/abs/1605.02688}
\showURL{%
\tempurl}


\bibitem[\protect\citeauthoryear{Trzasko and Manduca}{Trzasko and
  Manduca}{2008}]%
        {trzasko2008highly}
\bibfield{author}{\bibinfo{person}{Joshua Trzasko} {and}
  \bibinfo{person}{Armando Manduca}.} \bibinfo{year}{2008}\natexlab{}.
\newblock \showarticletitle{Highly Undersampled Magnetic Resonance Image
  Reconstruction via Homotopic $\ell_0$-Minimization}.
\newblock \bibinfo{journal}{\emph{IEEE Transactions on Medical imaging}}
  \bibinfo{volume}{28}, \bibinfo{number}{1} (\bibinfo{year}{2008}),
  \bibinfo{pages}{106--121}.
\newblock


\bibitem[\protect\citeauthoryear{Umetani, Kaufman, Igarashi, and
  Grinspun}{Umetani et~al\mbox{.}}{2011}]%
        {umetani2011sensitive}
\bibfield{author}{\bibinfo{person}{Nobuyuki Umetani}, \bibinfo{person}{Danny~M
  Kaufman}, \bibinfo{person}{Takeo Igarashi}, {and} \bibinfo{person}{Eitan
  Grinspun}.} \bibinfo{year}{2011}\natexlab{}.
\newblock \showarticletitle{Sensitive couture for interactive garment modeling
  and editing.}
\newblock \bibinfo{journal}{\emph{ACM Trans. Graph.}} \bibinfo{volume}{30},
  \bibinfo{number}{4} (\bibinfo{year}{2011}), \bibinfo{pages}{90}.
\newblock


\bibitem[\protect\citeauthoryear{Yin, Coros, Beaudoin, and van~de Panne}{Yin
  et~al\mbox{.}}{2008}]%
        {yin2008continuation}
\bibfield{author}{\bibinfo{person}{KangKang Yin}, \bibinfo{person}{Stelian
  Coros}, \bibinfo{person}{Philippe Beaudoin}, {and} \bibinfo{person}{Michiel
  van~de Panne}.} \bibinfo{year}{2008}\natexlab{}.
\newblock \showarticletitle{Continuation methods for adapting simulated
  skills}.
\newblock In \bibinfo{booktitle}{\emph{ACM SIGGRAPH 2008 papers}}.
  \bibinfo{pages}{1--7}.
\newblock


\bibitem[\protect\citeauthoryear{Zahrouni, Cochelin, and Potier-Ferry}{Zahrouni
  et~al\mbox{.}}{1999}]%
        {zahrouni1999computing}
\bibfield{author}{\bibinfo{person}{H Zahrouni}, \bibinfo{person}{B Cochelin},
  {and} \bibinfo{person}{M Potier-Ferry}.} \bibinfo{year}{1999}\natexlab{}.
\newblock \showarticletitle{Computing finite rotations of shells by an
  asymptotic-numerical method}.
\newblock \bibinfo{journal}{\emph{Computer methods in applied mechanics and
  engineering}} \bibinfo{volume}{175}, \bibinfo{number}{1-2}
  (\bibinfo{year}{1999}), \bibinfo{pages}{71--85}.
\newblock


\end{thebibliography}

\end{document}